\theoremstyle{plain}
\newtheorem{thm}{Theorem}[section]
\newtheorem{lem}[thm]{Lemma}
\newtheorem{conj}[thm]{Conjecture}
\newtheorem{claim}{Claim}
\theoremstyle{plain}
\theoremstyle{plain}
\theoremstyle{plain}
\title{Hamiltonian cycles in 7-tough $(P_3\cup 2P_1)$-free graphs}
\author{Yuping Gao$^{a}$, Songling Shan$^{b}$\\
{\small a. School of Mathematics and Statistics, Lanzhou University, Lanzhou 730000, China}\\
{\small b. Department of Mathematics, Illinois State University, Normal, IL 61790, USA}}
\date{}
\begin{document}
\baselineskip 0.65cm

\maketitle
\begin{abstract} The toughness of a noncomplete graph $G$ is the maximum real number $t$ such that the ratio of $|S|$ to the number of components of $G-S$ is at least $t$ for every cutset $S$ of $G$, and the toughness of a complete graph is defined to be $\infty$.  Determining the toughness for a given graph is NP-hard. Chv\'{a}tal's toughness conjecture, stating that there exists a constant $t_0$ such that every graph with toughness at least $t_0$ is hamiltonian, is still open for general graphs. A graph is called $(P_3\cup 2P_1)$-free if it does not contain any induced subgraph  isomorphic to $P_3\cup 2P_1$, the disjoint union of $P_3$ and two isolated vertices.  In this paper, we confirm Chv\'{a}tal's toughness conjecture  for $(P_3\cup 2P_1)$-free graphs by showing that every 7-tough $(P_3\cup 2P_1)$-free graph on at least three vertices is hamiltonian.

\medskip

\noindent {\textbf{Keywords}: Toughness; Hamiltonian cycle; $(P_3\cup 2P_1)$-free graph}
\end{abstract}

\section{Introduction}
All graphs considered in this paper are undirected and simple. Let $G$ be a graph. The vertex set and edge set of $G$ are denoted by $V(G)$ and $E(G)$, respectively. For a vertex $v\in V(G)$, the set of neighbors of $v$ in $G$ is denoted by $N_{G}(v)$. Let $H\subseteq G$ be a subgraph of $G$, $x\in V(G)$ and $S\subseteq V(G)$. Define $N_{H}(x)=N_{G}(x)\cap V(H)$, $N_{G}(S)=\cup_{x\in S}N_{G}(x)\setminus S$ and $N_{H}(S)=N_{G}(S)\cap V(H)$.
%Following the above notation, let $|N_{G}(v)|=d_{G}(v)$, $|N_{G}(S)|=d_{G}(S)$, $|N_{H}(x)|=d_{H}(x)$ and $|N_{H}(S)|=d_{H}(S)$. Furthermore,
We use $G[S]$ to denote the subgraph of $G$ induced by $S$ and $G-S$ to denote the subgraph $G[V(G)\setminus S]$. For simplicity, $G-\{x\}$ is written as $G-x$. If $uv\in E(G)$ is an edge, then we write $u\thicksim v$ and $u\nsim v$ otherwise. Let $V_1,V_2\subseteq V(G)$ be two disjoint vertex sets. Then the set of edges with one end in $V_1$ and the other in $V_2$ is denoted by $E_{G}(V_1,V_2)$. The subscript $G$ will be omitted in all the  notation above if no confusion may arise.

A \emph{hamiltonian path} (resp. \emph{cycle}) in a graph $G$ is a path (resp. cycle) which contains all the vertices of $V(G)$. A graph is called \emph{hamiltonian connected} if there is a hamiltonian path between any two distinct vertices. Let $c(G)$ denote the number of components in a graph $G$. Chv\'{a}tal \cite{C1973} defined a noncomplete graph $G$ to be $t$-\emph{tough} if $|S|\geq t\cdot c(G-S)$ for every subset $S\subseteq V(G)$ with $c(G-S)\geq 2$, i.e., $S$ is a \emph{cutset} of $G$. The \emph{toughness} $\tau(G)$ of $G$ is the maximum real number $t$ for which $G$ is $t$-tough or is $\infty$ if $G$ is complete. A cutset $S$ is called a \emph{tough set} in $G$ if $|S|=\tau(G)\cdot c(G-S)$.  Let $S$ be a cutset of $G$. A component of $G-S$ is said to be \emph{trivial} if it contains only one vertex, otherwise we call it \emph{nontrivial}. Since every cycle is 1-tough, every hamiltonian graph is 1-tough. However, as noted by Chv\'{a}tal, the converse only holds for graphs with at most six vertices. Figure 1 given by Chv\'{a}tal is an example of 1-tough nonhamiltonian graph with seven vertices.  Chv\'{a}tal conjectured that large enough toughness in a graph $G$ would guarantee that $G$ is hamiltonian. In \cite{C1973}, he proposed the following conjecture.

\begin{conj}\label{Conj1} {\rm(Chv\'{a}tal's toughness conjecture)} There exists a constant $t_0$ such that every $t_0$-tough graph on at least three vertices is hamiltonian.
\end{conj}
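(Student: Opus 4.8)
The plan is to pursue the classical extremal method for Hamiltonicity while committing to a concrete candidate constant $t_0$ (the strongest plausible value being $t_0=2$, whereas the Bauer--Broersma--Veldman construction shows that any valid $t_0$ must exceed $9/4$, so one aims for a finite, explicit bound). Suppose $G$ is $t_0$-tough on at least three vertices but is not hamiltonian. Since $\tau(G)\leq \kappa(G)/2$ gives $\kappa(G)\geq 2t_0\geq 2$, the graph $G$ is $2$-connected, so a longest cycle $C$ exists; orient it cyclically and set $R=V(G)\setminus V(C)$, which is nonempty because $C$ is not hamiltonian. For $x\in V(C)$ write $x^+$ for its successor along $C$.

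Next I would extract the standard consequences of the maximality of $C$. Let $H$ be a component of $G[R]$ and let $A=N_C(V(H))$ be its attachment set on $C$. A rotation/insertion argument shows that $A^+=\{a^+ : a\in A\}$ avoids $V(H)$, that no vertex of $A^+$ is adjacent to $V(H)$, and that $A^+$ is an independent set (otherwise $C$ could be rerouted through $H$ to produce a longer cycle). Taking $Y$ to be the union of the attachment sets over all components of $G[R]$ yields a cutset, and the toughness hypothesis gives
\[
|Y| \;\geq\; t_0\cdot c(G-Y).
\]

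The crux, and the step I expect to be the genuine obstacle, is to force $c(G-Y)$ large enough to contradict this inequality for a finite $t_0$. The natural distinct pieces of $G-Y$ are the components of $G[R]$ together with the arcs into which $Y$ cuts $C$, and the successor vertices $A^+$ are the natural witnesses that these arcs stay separated. In a general graph, however, nothing stops the vertices of $A^+$ from being adjacent to one another or from sending chords deep into $C$; such edges merge the arcs of $C-Y$ into one large component, the count $c(G-Y)$ collapses back to about $c(G[R])+1$, and the displayed inequality is satisfied with no contradiction. Controlling these successor adjacencies in an arbitrary graph is exactly the difficulty that has kept the conjecture open: high toughness supplies high connectivity, but the Chv\'{a}tal--Erd\H{o}s route $\kappa\geq\alpha$ is unavailable because toughness does not force the independence number below the connectivity, and no purely toughness-based counting is known to bound the number of surviving arcs.

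For this reason the only presently viable way to complete the counting step is to inject structural information that tames $A^+$. Forbidding a fixed induced subgraph --- as in this paper's restriction to $(P_3\cup 2P_1)$-free graphs --- pins the successor vertices into a controlled adjacency pattern, so that a large independent transversal of the arcs and of the components of $G[R]$ can be extracted and the displayed inequality becomes contradictory for an explicit $t_0$. For the unrestricted conjecture I would instead attempt a global discharging scheme that amortizes each ``lost'' arc against the surplus connectivity guaranteed by toughness. I expect this amortization to be precisely where the argument either closes or, as at present, stalls, since it is here that a general graph declines to deliver the component multiplicity that a finite toughness constant would require.
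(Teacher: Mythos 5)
There is a fundamental mismatch here: the statement you were asked to prove is Conjecture~\ref{Conj1}, which is an \emph{open conjecture} --- the paper does not prove it, and nobody has. The paper's actual contribution, Theorem~\ref{thm}, confirms the conjecture only for the restricted class of $(P_3\cup 2P_1)$-free graphs, with the explicit constant $7$. Your proposal, to its credit, is honest about this: you sketch the classical longest-cycle/rotation framework, and then you yourself identify that the counting step --- forcing $c(G-Y)$ to be large enough relative to $|Y|$ to contradict $|Y|\geq t_0\cdot c(G-Y)$ --- collapses in a general graph, because the successor set $A^+$ can have internal adjacencies and chords that merge the arcs of $C-Y$ into few components. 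That is indeed the genuine obstruction, and since you never close it, what you have written is a correct diagnosis of why the problem is hard, not a proof. As a proof attempt it has a gap that cannot be repaired by the methods you describe; it is precisely the gap that keeps the conjecture open.

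One further point of comparison: even in the restricted setting where the paper does succeed, its method is not the longest-cycle/independent-successor argument you outline. The paper instead assumes (via Lemma~\ref{lem:degree-sum}) that some two nonadjacent vertices $u,v$ have small degree sum, takes the cutset $S_{uv}=N(u)\cup N(v)$, and exploits the fact that $(P_3\cup 2P_1)$-freeness forces every component of $G-S_{uv}$ to be a complete graph. It then uses generalized star-matchings (Lemmas~\ref{2.1} and~\ref{2.5}) to link hamiltonian paths of these complete components into one cycle, and finally absorbs the remaining cut vertices one at a time using the cycle-extension Lemma~\ref{2.3}. So the structural hypothesis is not used to ``tame $A^+$'' inside a rotation argument, as your sketch suggests; it is used to replace the rotation argument entirely with a decomposition into cliques plus matching/insertion machinery. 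If you were to attempt the special case yourself, that distinction matters: the clique structure of $G-S_{uv}$ is what makes the component count controllable, and no analogue of it is available for general graphs.
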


\begin{center}
	\begin{tikzpicture}[scale=0.7]
	{\tikzstyle{every node}=[draw,circle,fill=white,minimum size=4pt,
		inner sep=0pt]
		\draw (0,0) node (v1)  {}
		-- ++(0:4cm) node (v2)  {}
		-- ++(120:4cm) node (v3)  {}
		-- (v1);
		\draw (2,1.3) node (v0)  {}
		-- (v1)
		 ++(40:3.2cm) node (v4)  {}
		-- (v0);
		\draw (2.3,0.5) node (v5)  {}
		-- (v2);
        \draw (1.3,1.3) node (v6)  {}
		-- (v0)-- (v2);
		\draw (v6) -- (v1);
\draw (v5) -- (v0)-- (v3)-- (v4);}
	
	\node at (2,-0.7) {Figure 1: 1-tough nonhamiltonian graph with seven vertices};
	\end{tikzpicture}
\end{center}

In 2000, Bauer, Broersma and Veldman \cite{BBV2000} showed that for every $\epsilon>0$, there exists a $(\frac{9}{4}-\epsilon)$-tough graph with no hamiltonian path. It follows that if Chv\'{a}tal's toughness conjecture is true,  then $t_0\geq \frac{9}{4}$. Chv\'{a}tal's toughness conjecture is still open. However, it is true for a number of well-studied classes of graphs including planar graphs \cite{T1956}, interval graphs \cite{K1985}, claw-free chordal graphs \cite{BP1986}, planar chordal graphs \cite{BHT1999}, cocomparability graphs \cite{DKS1997},
split graphs \cite{KLM1996}, spider graphs \cite{KKS2007}, chordal graphs \cite{CJKL1998,KK2017}, $k$-trees ($k\geq 2$) \cite{BXY2007}, $R$-free graphs for $R\in \{P_4, K_1\cup P_3, 2K_1\cup K_2\}$ \cite{LBZ2016}, $2K_2$-free graphs \cite{BPP2014,S2020,OS2021}, $(P_2\cup P_3)$-free graphs \cite{S2021}, and $(P_2\cup 3P_1)$-free graphs \cite{HG2021}.

%\begin{itemize}
%\item $t_0>\frac{3}{2}$ for planar graphs \cite{T1956};
%\item $t_0\geq 1$ for interval graphs \cite{K1985}, claw-free chordal graphs \cite{BP1986}, planar chordal graphs \cite{BHT1999}, cocomparability graphs \cite{DKS1997} and $R$-free graphs for $R\in \{P_4, K_1\cup P_3, 2K_1\cup K_2\}$ \cite{LBZ2016};
%\item $t_0\geq \frac{3}{2}$ for split graphs \cite{KLM1996} and spider graphs \cite{KKS2007};
%\item $t_0\geq 18$ for chordal graphs \cite{CJKL1998};
%\item $t_0\geq 10$ for chordal graphs \cite{KK2017};
%\item $t_0\geq \frac{k+1}{3}$ for $k$-trees ($k\geq 2$) \cite{BXY2007};
%\item $t_0\geq 25$ for $2K_2$-free graphs \cite{BPP2014};
%\item $t_0\geq 3$ for $2K_2$-free graphs \cite{S2020};
%\item $t_0\geq 2$ for $2K_2$-free graphs \cite{OS2021};
%\item $t_0\geq 15$ for $(P_2\cup P_3)$-free graphs \cite{S2021};
%\item $t_0\geq 3$ for $(P_2\cup 3P_1)$-free graphs \cite{HG2021}.
%\end{itemize}

Let $P_{n}$ denote a path with $n$ vertices. A graph is $(P_3\cup 2P_1)$-\emph{free} if it does not contain any induced subgraph  isomorphic to $P_3\cup 2P_1$, the disjoint union of $P_3$ and two isolated vertices.
 In this paper, we study the hamiltonicity of $(P_3\cup 2P_1)$-free graphs under a given toughness condition and obtain the following result.
% Since $P_2$ is the complement of $2P_1$, the proof technique of $(P_2\cup P_3)$-free graphs in \cite{S2021} is not applicable for $(P_3\cup 2P_1)$-free graphs. The proof approach used in this paper for showing Theorem \ref{thm} below is new and may be used to seek other properties of $(P_3\cup 2P_1)$-free graphs.

\begin{thm}\label{thm} Every  $7$-tough $(P_3\cup 2P_1)$-free graph on at least three vertices  is hamiltonian.
\end{thm}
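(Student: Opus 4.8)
The plan is to prove hamiltonicity by taking a longest cycle $C$ in $G$ and deriving a contradiction if $C$ is not spanning, exploiting both the structural restriction imposed by $(P_3\cup 2P_1)$-freeness and the quantitative control given by $7$-toughness. First I would record the standard consequences of high toughness: a $7$-tough graph is $14$-connected, and in particular has minimum degree at least $14$ and is $2$-connected, so a longest cycle $C$ exists and has length at least $3$. Suppose for contradiction that $V(G)\setminus V(C)\neq\emptyset$, and fix a component $H$ of $G-V(C)$.

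The heart of the argument is to understand how the vertices outside $C$ attach to $C$. Orient $C$ and for a vertex $x\in V(H)$ with neighbor $v$ on $C$, let $v^{+}$ and $v^{-}$ denote the successor and predecessor of $v$ along $C$. The classical Woodall/Hopping-type observations give that the sets $N_C(x)^{+}$ of successors of neighbors of $x$ form an independent set and are nonadjacent to $x$ (otherwise $C$ could be rerouted into a longer cycle), and analogously for predecessors. This is precisely where $(P_3\cup 2P_1)$-freeness enters: because $G$ contains no induced $P_3\cup 2P_1$, any induced $P_3$ severely limits the number of vertices that can be simultaneously nonadjacent to both of its endpoints. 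Concretely, I would pick an induced path $u_1u_2u_3$ using a vertex $x\in V(H)$ together with two of its off-cycle neighbors or a neighbor on $C$, and then argue that the set of vertices missed by $C$ together with certain successor/predecessor vertices cannot all be pairwise "far" from this $P_3$; this forces the number of vertices outside $C$, and the number of independent insertion-gaps on $C$, to be uniformly bounded by a small constant.

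With the structural bound in hand, the quantitative finish is to construct an explicit cutset that violates $7$-toughness. The idea is to take $S$ to be the union of the attachment neighborhoods on $C$ (a small, explicitly bounded set because $(P_3\cup 2P_1)$-freeness caps the number of distinct attachment vertices), and to show that $G-S$ has many components: each gap along $C$ that contains a vertex in $N_C(x)^{+}$ splits off, and the off-cycle part $H$ contributes additional components. Counting carefully, one obtains $|S|<7\cdot c(G-S)$, contradicting $7$-toughness. The constant $7$ should emerge from balancing the number of attachment points (bounded via the forbidden induced subgraph) against the number of created components; the role of the coefficient is to absorb the worst case in which several independent insertion vertices are consolidated into a single cutset vertex.

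The main obstacle I anticipate is the interplay in the second step: translating $(P_3\cup 2P_1)$-freeness into a clean numerical bound on the attachment structure. Forbidding $P_3\cup 2P_1$ is weaker than forbidding $2K_2$ or $P_4$, so a single induced $P_3$ can have up to two vertices nonadjacent to it, and one must handle the cases where the relevant ``independent'' witnesses partly lie on $C$ and partly in $H$. Carefully choosing which induced $P_3$ to center the argument on — likely one whose endpoints have large, disjoint nonneighborhoods among the successor vertices — and ruling out the degenerate configurations (short cycles, $H$ adjacent to nearly all of $C$, consecutive attachment vertices) is where the real work lies; the toughness counting at the end should then be comparatively mechanical once the attachment set is shown to be small.
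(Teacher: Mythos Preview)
Your plan follows the classical longest-cycle/Woodall framework, which is entirely different from the paper's argument, and the step you yourself flag as ``where the real work lies'' does not go through as stated. The claim that $(P_3\cup 2P_1)$-freeness caps the attachment set $N_C(H)$ at a small constant is not correct. Fix $x\in V(H)$ with neighbours $v_1,\dots,v_k$ on $C$. From the induced path $xv_iv_i^{+}$ one only deduces that the set of vertices nonadjacent to all of $x,v_i,v_i^{+}$ is a clique; since the $v_j^{+}$ are pairwise nonadjacent, at most one $v_j^{+}$ with $j\ne i$ can miss $v_i$, so almost every $v_j^{+}$ is adjacent to $v_i$. But such chords $v_j^{+}v_i$ do \emph{not} produce a longer cycle, and nothing prevents $k$ from being of order $|V(C)|$. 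Taking $S=N_C(H)$ then yields a large cutset while $G-S$ has only boundedly many components (the off-cycle pieces, together with arcs of $C$ that are merged by the abundant chords just exhibited), so no violation of $7$-toughness results. The obstacle you anticipate is not a technicality to be cleaned up but the heart of the matter, and your outline supplies no mechanism to overcome it.

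The paper's proof is constructive and never analyses a longest cycle. It first uses a degree-sum lemma to locate nonadjacent $u,v$ with $d(u)+d(v)\le \frac{n}{4}+5$; then $S=N(u)\cup N(v)$ (after trimming vertices that see only $u$ or only $v$ outside) is a cutset of size at most $\frac{n}{4}+5$ with $c(G-S)\ge 3$, and $(P_3\cup 2P_1)$-freeness forces every component of $G-S$ to be complete. A hamiltonian cycle is then built explicitly. When no component dominates, a generalized $K_{1,2}$-matching from the components into $S$ strings them into one cycle, and the leftover vertices of $S$ are absorbed one at a time via a cycle-extension lemma. When one component $D_1$ is huge, disjoint paths covering all the small components are routed so that both endpoints lie in (or can be extended into) $V(D_1)$, and the cycle is closed inside the clique $D_1$. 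Throughout, the forbidden subgraph is used to make components complete, not to bound an attachment set.
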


The remainder of this paper is organized as below. In next section, we introduce some
notation and preliminaries. In Section 3, we prove Theorem~\ref{thm}.

\section{Preliminaries}

We start this section with some definitions and notation. For two integers $p$
and $q$, we let $[p,q]=\{i\in \mathbb{Z}:  p\le i\le q\}$.
A \emph{star-matching} in a graph is a set of vertex-disjoint copies of stars. The vertices of degrees at least 2 in a star-matching are called the \emph{centers} of the star-matching. In particular, if every star in a star-matching is isomorphic to $K_{1,t}$, where $t\geq 1$ is an integer, we call the star-matching a $K_{1,t}$-\emph{matching}. For a star-matching $M$, we denote by $V(M)$ the set of vertices covered by $M$. And if $x,y\in V(M)$ and $xy\in E(M)$, we say $x$ is a \emph{partner} of $y$.

Let $C$ be an oriented cycle. For $x\in V(C)$, denote the immediate successor of $x$ on $C$ by $x^{+}$. For $u,v\in V(C)$, $u\overrightarrow{C}v$ denotes the segment of $C$ starting with $u$, following $C$ in the orientation, and ending at $v$. Likewise, $u\overleftarrow{C}v$ is the opposite segment of $C$ with ends $u$ and $v$. We assume all cycles in consideration afterwards are oriented. Let $P$ be a path, $u,v\in V(P)$ be two vertices in $P$, the segment of $P$ with ends $u$ and $v$ is denoted by $uPv$. Let $uPv$ and $xQy$ be two disjoint paths. If $v$ is adjacent to $x$, we write $uPvxQy$ as the concatenation of $P$ and $Q$ through the edge $vx$.

We will need the following Lemmas  in our proof.

\begin{lem}\label{2.1}{\rm(\cite{AK2011}, Theorem 2.10)} Let $G$ be a bipartite graph with partite sets $X$ and $Y$, and $f$ be a function from $X$ to the set of positive integers. If for every $S\subseteq X$, $|N_G(S)|\geq \sum\limits_{v\in S}f(v)$, then $G$ has a subgraph $H$ such that $X\subseteq V(H)$, $d_H(v)=f(v)$ for every $v\in X$, and $d_{H}(u)=1$ for every $u\in Y\cap V(H)$.
\end{lem}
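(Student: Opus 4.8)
The plan is to deduce this statement from the classical Hall's marriage theorem by a standard vertex-splitting reduction. First I would build an auxiliary bipartite graph $G'$ with partite sets $X'$ and $Y$ as follows: replace each vertex $v\in X$ by $f(v)$ distinct copies $v^{(1)},\ldots,v^{(f(v))}$, placing all of them into $X'$, and join each copy $v^{(i)}$ to exactly the set $N_G(v)$ of neighbors of the original vertex $v$ in $Y$. Thus every copy of $v$ inherits the neighborhood $N_G(v)$, and $|X'|=\sum_{v\in X}f(v)$. The goal is to produce a matching of $G'$ that saturates $X'$ and then collapse the copies back onto their originals.

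The central claim is that $G'$ satisfies Hall's condition, i.e. $|N_{G'}(S')|\ge |S'|$ for every $S'\subseteq X'$. To verify this, let $S\subseteq X$ be the set of original vertices having at least one copy in $S'$ (the ``shadow'' of $S'$). Since all copies of a given $v$ share the neighborhood $N_G(v)$, we have $N_{G'}(S')=N_G(S)$, while $|S'|\le \sum_{v\in S}f(v)$ because $S'$ contains at most $f(v)$ copies of each $v\in S$. Combining these two facts with the hypothesis $|N_G(S)|\ge \sum_{v\in S}f(v)$ gives $|N_{G'}(S')|=|N_G(S)|\ge \sum_{v\in S}f(v)\ge |S'|$, which is exactly what Hall's condition requires.

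By Hall's marriage theorem, $G'$ then admits a matching $M'$ saturating $X'$, so every copy $v^{(i)}$ is matched to a distinct vertex of $Y$. I would now un-split $M'$ to construct $H$: for each $v\in X$, let $H$ join $v$ to the $f(v)$ vertices of $Y$ that $M'$ matches to the copies $v^{(1)},\ldots,v^{(f(v))}$. Because $M'$ is a matching, these $f(v)$ vertices are pairwise distinct, and across all of $X$ no vertex of $Y$ is used twice. Hence $d_H(v)=f(v)$ for every $v\in X$ (in particular $X\subseteq V(H)$, since $f(v)\ge 1$), and every $u\in Y\cap V(H)$ lies on exactly one edge of $H$, giving $d_H(u)=1$, as desired.

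The only point requiring genuine care is the verification of Hall's condition for an \emph{arbitrary} $S'\subseteq X'$, rather than merely for the ``full'' subsets that contain all copies of each shadow vertex. The resolution is the observation that enlarging $S'$ to a full subset of its shadow only increases $|S'|$ while leaving $N_{G'}(S')$ unchanged, so the worst case reduces precisely to the hypothesis and no additional difficulty arises. Everything else is routine bookkeeping, and this reduction is the standard route by which degree-constrained versions of Hall's theorem are obtained.
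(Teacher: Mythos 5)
Your proof is correct: the vertex-splitting construction, the verification of Hall's condition via the ``shadow'' set $S$ (using $N_{G'}(S')=N_G(S)$ and $|S'|\le\sum_{v\in S}f(v)$), and the un-splitting of the matching into the stars of $H$ are all sound, and the resulting edges do lie in $G$ since each copy inherits $N_G(v)$. Note that the paper itself gives no proof of this lemma --- it is quoted from Akiyama--Kano (Theorem 2.10 of \cite{AK2011}) --- so there is nothing in the paper to compare against; your reduction to Hall's marriage theorem is the standard argument for exactly this kind of degree-prescribed star-matching result.
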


\begin{lem}\label{2.2}{\rm(\cite{BBVV1995})} Let $t>0$ be a real number and $G$ be a $t$-tough graph on $n\ge 3$ vertices  with $\delta(G)>\frac{n}{t+1}-1$. Then $G$ is hamiltonian.
\end{lem}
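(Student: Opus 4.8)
The plan is to argue by contradiction, assuming $G$ is not hamiltonian, and to produce an independent set that is too large for a $t$-tough graph. First I would record the equivalent form of the hypothesis, $(t+1)(\delta(G)+1)>n$, and the following toughness--independence bound. Since $t>0$, $G$ is connected. If $I$ is an independent set with $|I|\ge 2$, then for each $v\in I$ we have $N_G(v)\subseteq N_G(I)$, so $v$ is isolated in $G-N_G(I)$; hence $N_G(I)$ is a cutset with $c(G-N_G(I))\ge |I|$, and $t$-toughness gives $|N_G(I)|\ge t|I|$. As $|N_G(I)|\le n-|I|$, we obtain $(t+1)|I|\le n$, so $\alpha(G)\le n/(t+1)$ (the case $\alpha(G)=1$ being a complete graph, which is hamiltonian for $n\ge 3$). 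Consequently it suffices to exhibit an independent set of size at least $\delta(G)+1$: then $(t+1)(\delta(G)+1)\le (t+1)\alpha(G)\le n$, contradicting the hypothesis.

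To build such a set I would use a longest cycle. Fix a longest cycle $C$, oriented; since $G$ is not hamiltonian, $R:=V(G)\setminus V(C)\ne\emptyset$. Let $H$ be a component of $G[R]$, and put $A=N_C(H)$ and $A^{+}=\{x^{+}:x\in A\}$. The maximality of $C$ forces the standard successor structure: no vertex of $A^{+}$ lies in $A$ or is adjacent to $H$, and $A^{+}$ is independent. Both follow from the usual crossing argument. If $x,y\in A$ with $x^{+}y^{+}\in E(G)$, choose an $x$--$y$ path $Q$ through $H$ (possible since $H$ is connected and both $x,y$ attach to it); then $x\,Q\,y\overleftarrow{C}x^{+}y^{+}\overrightarrow{C}x$ is a cycle through $V(C)$ together with the internal vertices of $Q$, hence longer than $C$, a contradiction. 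The non-adjacency of $A^{+}$ to $H$ is the analogous one-gap detour: if $x^{+}\in A$ then $x$ and $x^{+}$ both attach to $H$, and a path through $H$ between them replacing the edge $xx^{+}$ again lengthens $C$. Thus for any independent set $J\subseteq V(H)$ the set $A^{+}\cup J$ is independent, of size $|A|+|J|$. In particular, if some component $H$ is a single vertex $v$, then $A=N_G(v)$ and $A^{+}\cup\{v\}$ is independent of size $\deg_G(v)+1\ge \delta(G)+1$, and we are done.

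It remains to handle components $H$ with $|V(H)|\ge 2$. Writing $J$ for a maximum independent set of $G[H]$, the independent set $A^{+}\cup J$ has size $|A|+\alpha(G[H])$, and it would suffice to show this is at least $\delta(G)+1$. However, the only bound the local degree count supplies is $|A|\ge \delta(G)+1-|V(H)|$, coming from $\deg_G(v)\le |V(H)|-1+|A|$ for $v\in H$, and this is too weak once $H$ is large. I expect this to be the main obstacle. Resolving it requires a more global use of the longest-cycle structure than the single successor lemma: one must track, via insertion arguments, the successors of the attachment vertices of the off-cycle vertices simultaneously and assemble them into a single independent set whose size is bounded below by $\delta(G)$, rather than relying on one component in isolation. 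This refined extremal analysis is the technical core of the argument of Bauer, Broersma, van den Heuvel and Veldman~\cite{BBVV1995}. Once an independent set of size at least $\delta(G)+1$ is in hand, the toughness--independence bound of the first step gives $(t+1)(\delta(G)+1)\le n$, contradicting the hypothesis and establishing the lemma.
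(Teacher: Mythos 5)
The paper itself contains no proof of this lemma: it is imported verbatim from \cite{BBVV1995} and used as a black box, so your attempt can only be judged on whether it stands alone --- and it does not, as you concede yourself. What you do prove is correct. The toughness--independence bound is right: for an independent set $I$ with $|I|\ge 2$ in a $t$-tough graph, every vertex of $I$ is isolated in $G-N_G(I)$, so $t|I|\le |N_G(I)|\le n-|I|$, giving $\alpha(G)\le n/(t+1)$; hence it suffices to exhibit, in a nonhamiltonian $G$, an independent set of size $\delta(G)+1$. The longest-cycle successor analysis is also sound: $A^{+}$ is independent, no vertex of $A^{+}$ is adjacent to the component $H$, and when some component of $G-V(C)$ is a single vertex $v$, the set $A^{+}\cup\{v\}$ has size $\deg_G(v)+1\ge\delta(G)+1$, which settles that case.

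The gap is exactly the case you flag: every component of $G-V(C)$ nontrivial. There your set $A^{+}\cup J$ has size $|A|+\alpha(G[V(H)])$, and the only lower bound degrees provide is $|A|\ge\delta(G)+1-|V(H)|$, which is vacuous once $H$ is large; for instance, if $H$ is a large clique then $\alpha(G[V(H)])=1$ while toughness forces only $|A|\ge 2t$, far below $\delta(G)$ when $\delta(G)$ is large. Closing this case is not a routine extension of the successor lemma: it requires the simultaneous insertion (``hopping'') analysis of the attachment structure of all off-cycle vertices, and that analysis is essentially the entire content of the Bauer--Broersma--van den Heuvel--Veldman paper. Deferring to that paper at precisely this point makes your argument circular as a proof of the lemma. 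So what you have is a correct reduction plus a proof of a special case, not a proof; as a self-contained verification it fails, and as a comparison it simply reproduces the easy periphery of the result the paper cites.
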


\begin{lem}\label{lem:degree-sum}{\rm(\cite{SS2021})} Let $t>0$ be a real number and $G$ be a $t$-tough graph on $n\geq 3$ vertices. If the degree sum of any two nonadjacent vertices of $G$ is greater than $\frac{2n}{t+1}+t-2$, then $G$ is hamiltonian.
\end{lem}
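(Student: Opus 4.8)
The plan is to argue by contradiction, following the longest-cycle method underlying the minimum-degree version in Lemma~\ref{2.2}. Complete graphs on at least three vertices are hamiltonian, so I may assume $G$ is noncomplete; being $t$-tough with $t>0$ it is connected, and the hypotheses rule out $G$ being a forest, so $G$ contains a cycle. Suppose $G$ is not hamiltonian and fix a longest cycle $C$ of $G$, with a chosen orientation. Since $G$ is not hamiltonian, $R:=G-V(C)$ is nonempty; let $H$ be a component of $R$ and pick $h\in V(H)$ with $N_C(h)\neq\emptyset$ (such an $h$ exists by the connectivity of $G$). Write $A=N_C(h)$ and $a=|A|\geq 1$.

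First I would record the standard consequences of the maximality of $C$. For each $x\in A$, the successor $x^+$ satisfies $h\nsim x^+$ (otherwise $C$ could be lengthened by inserting $h$ between $x$ and $x^+$); the successor set $A^+=\{x^+:x\in A\}$ is independent (otherwise an edge between two successors reroutes $C$ into a strictly longer cycle); and no vertex of $A^+$ has a neighbor in $H$ (otherwise a path through $H$ extends $C$). Hence $I:=\{h\}\cup A^+$ is an independent set of size $a+1$, and every $w\in A^+$ is a nonneighbor of $h$, so the hypothesis gives $d(h)+d(w)>\frac{2n}{t+1}+t-2$ for each such $w$.

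The toughness enters through $I$. Deleting $S:=N(I)$ isolates every vertex of $I$, so $c(G-S)\geq a+1\geq 2$ and $S$ is a cutset; toughness yields $|S|\geq t(a+1)$, while $S\subseteq V(G)\setminus I$ gives $|S|\leq n-(a+1)$. Combining, $(a+1)(t+1)\leq n$, that is, $a\leq \frac{n}{t+1}-1$. Since all neighbors of $h$ lie in $A$ or in $H$, this already bounds $d(h)\leq a+(|V(H)|-1)\leq \frac{n}{t+1}-1+(|V(H)|-1)$. The parallel task is to bound the degree of a well-chosen $w\in A^+$: using that $w$ is nonadjacent to $h$, to the other successors in $A^+$, and to all of $H$, its neighbors are confined to $V(C)\setminus A^+$ together with the components of $R$ other than $H$, and a second application of toughness to the attachment cutset $X:=N_C(V(H))$ and the arcs of $C$ it determines should bound $d(w)$ by roughly $\frac{n}{t+1}+t-1$, the additive $t-1$ being the correction produced by that cutset. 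Adding the two estimates would give $d(h)+d(w)\leq \frac{2n}{t+1}+t-2$, contradicting the displayed lower bound and completing the proof.

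The main obstacle is this second degree estimate and the bookkeeping linking it to the first. One must control how the arcs of $C$ between consecutive attachment vertices of $X$ coalesce---through chords of $C$ or through the other components of $R$---into the components of $G-X$, since a naive lower bound on $c(G-X)$ overcounts; and one must absorb the terms involving $|V(H)|$ and $d_H(h)$ so that the nontrivial-component case reduces cleanly to the bound above. Choosing the right cutset (possibly $X$ enlarged by selected successor vertices) and the right non-neighbor $w$ to test against the degree-sum hypothesis is the delicate point on which the constant $\frac{2n}{t+1}+t-2$ hinges.
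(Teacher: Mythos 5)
Your proposal has a genuine gap, and it sits exactly where you admit it does. The opening is correct and standard: for a longest cycle $C$, the set $I=\{h\}\cup A^+$ is independent, every vertex of $I$ is isolated in $G-N(I)$, so toughness gives $|N(I)|\geq t\,|I|$ while $|N(I)|\leq n-|I|$, yielding $a=|A|\leq \frac{n}{t+1}-1$. But the entire content of the lemma lies in the second estimate, which you never establish: you only assert that applying toughness to $X=N_C(V(H))$ ``should'' bound $d(w)$ by roughly $\frac{n}{t+1}+t-1$ for a well-chosen $w\in A^+$. That step is not routine. As you yourself note, the arcs of $C$ between consecutive attachment vertices can coalesce into a single component of $G-X$ via chords of $C$ or via other components of $G-V(C)$, so $c(G-X)$ has no easy lower bound, and no mechanism is proposed to repair this. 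A proof that defers its hardest step to a hope is not a proof; this is precisely the part that requires the delicate segment/insertion machinery of the Bauer--Broersma--van den Heuvel--Veldman type argument behind Lemma~\ref{2.2}, and of the actual proof in \cite{SS2021} (note the paper under review does not prove this lemma either; it quotes it).

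Moreover, even granting the asserted estimate, your arithmetic does not close. You bound $d(h)\leq a+(|V(H)|-1)\leq \frac{n}{t+1}+|V(H)|-2$, so together with the hoped-for $d(w)\leq \frac{n}{t+1}+t-1$ you get $d(h)+d(w)\leq \frac{2n}{t+1}+t+|V(H)|-3$, which contradicts the hypothesis only when $|V(H)|\leq 1$. For any nontrivial component $H$ the inequality points the wrong way by the surplus $|V(H)|-1$, and nothing in the proposal absorbs it: one would need, for instance, an argument that vertices of $H$ can be inserted into $C$, or a different choice of the pair of nonadjacent vertices (e.g.\ both taken relative to $H$ rather than one inside $H$), or a cutset accounting that charges $|V(H)|$ back into the component count. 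Since both the unproven estimate and the nontrivial-$H$ case are acknowledged but unresolved, the proposal is an outline of the correct framework rather than a proof.
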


\begin{lem}\label{2.3}{\rm(\cite{SS2021})} Let $t \ge 1 $ be a real number, $G$ be a $t$-tough graph on $n\ge 3$ vertices and $C$ be a nonhamiltonian cycle of $G$. If $x\in V(G)\setminus V(C)$ satisfies that $d_{C}(x)>\frac{n}{t+1}-1$, then $G$ has a cycle $C'$ such that $V(C')=V(C)\cup \{x\}$.
\end{lem}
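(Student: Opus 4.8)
The plan is to argue by contradiction: assume $G$ has no cycle $C'$ with $V(C')=V(C)\cup\{x\}$, and convert this failure into an independent set whose size exceeds the bound that $t$-toughness forces on the independence number. Write $A=N_C(x)$ and $k=d_C(x)=|A|$, and orient $C$ so that each $u\in A$ has a successor $u^+$. The heart of the argument is an insertion/rerouting analysis of $C$ relative to $x$: I will show that the impossibility of extending $C$ forces the successor set $A^+=\{u^+:u\in A\}$ together with $x$ to form an independent set of size $k+1$, and then contradict toughness.

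First I would record the two elementary insertion facts. If some $u\in A$ has $u^+\in A$ as well, then replacing the edge $uu^+$ of $C$ by the path $uxu^+$ yields a cycle on exactly $V(C)\cup\{x\}$; hence no two neighbours of $x$ are consecutive on $C$, so $A\cap A^+=\varnothing$ and $|A^+|=k$. Moreover, if $x\sim u^+$ for some $u\in A$, then $u^+\in A$, contradicting $A\cap A^+=\varnothing$; thus $x$ is adjacent to no vertex of $A^+$.

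The key step, and the one I expect to be the main obstacle, is to show that $A^+$ is itself independent. Suppose instead $u^+\sim v^+$ for two distinct $u,v\in A$. Then
\[
x\,u\,\overleftarrow{C}\,v^{+}\,u^{+}\,\overrightarrow{C}\,v\,x
\]
is a cycle: starting at $x$ it goes to $u$, runs backwards along $C$ to $v^{+}$, crosses the chord $v^{+}u^{+}$, runs forwards along $C$ to $v$, and returns to $x$ through the edge $vx$. A direct check shows that the two arcs $u\overleftarrow{C}v^{+}$ and $u^{+}\overrightarrow{C}v$ partition $V(C)$, so this cycle spans exactly $V(C)\cup\{x\}$, contradicting the assumption. (The degenerate case $v=u^+$ is excluded by the previous paragraph, so both arcs are well defined.) Hence $A^{+}$ is independent, and combined with $x\nsim A^{+}$ the set $A^{+}\cup\{x\}$ is an independent set of size $k+1$.

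Finally I would invoke toughness. For any independent set $I$ with $|I|\ge 2$, taking $S=V(G)\setminus I$ makes $G-S$ consist of exactly $|I|$ isolated vertices, so $S$ is a cutset with $c(G-S)=|I|$, and $t$-toughness gives $n-|I|=|S|\ge t|I|$, i.e. $|I|\le \frac{n}{t+1}$. Applying this to $I=A^{+}\cup\{x\}$ yields $k+1\le\frac{n}{t+1}$, contradicting the hypothesis $k=d_C(x)>\frac{n}{t+1}-1$. The only degenerate possibility, $k=0$, is easily ruled out: it would force $n<t+1$, hence $t>n-1$, which for a noncomplete graph is impossible since $\tau(G)\le\frac{n}{2}-1$, while a complete $G$ on $n\ge 3$ vertices gives $d_C(x)=|V(C)|\ge 2$. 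This contradiction completes the proof, the crossing-chord rerouting being the only nonroutine ingredient.
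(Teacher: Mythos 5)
Your proof is correct: the crossing-chord rerouting $x\,u\,\overleftarrow{C}\,v^{+}\,u^{+}\,\overrightarrow{C}\,v\,x$ together with the non-consecutiveness of neighbours of $x$ does make $A^{+}\cup\{x\}$ independent, and the toughness bound $|I|\le\frac{n}{t+1}$ then contradicts $d_C(x)>\frac{n}{t+1}-1$, with the $k=0$ and complete-graph degeneracies handled soundly. The paper itself gives no proof of this lemma (it is quoted from \cite{SS2021}), and your argument is exactly the standard successor-set/independence-number argument used for such insertion lemmas, so there is nothing to flag beyond the harmless understatement $|V(C)|\ge 2$ (a cycle has at least three vertices).
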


%\begin{lem}\label{lem:path-insertion}
%	Let $G$ be a $t$-tough  noncomplete graph for some $t>0$ on $n\ge 3$ vertices and $C$ be a  nonhamiltonian cycle of $G$. For an $xy$-path  $P$ in $G-V(C)$, if $d_C(x), d_C(y) >\frac{n}{t+1}-1$ and $|N_C(x)\cap N_C(y)|>\frac{n}{t+1}-1$, then $G$ has a cycle $C'$
%	such that $V(C')=V(C)\cup V(P)$.
%\end{lem}
%\proof Since $G$ is noncomplete, $\delta(G) \le n-2$ and so $\tau(G) \le \frac{n-2}{2}$.
%Thus $\frac{n}{t+1}\ge 2$.
%Let $W=N_C(x)\cap N_C(y)$. It is clear that for any two vertices $u,v\in W$, $uv\not\in E(C)$. Define $W^+=\{w^+: w\in W\}$ to be the set of successors of vertices of $W$ on $C$.
%We claim that $W^+$ is an independent set in $G$. For otherwise, there exist $u,v\in W$
%such that $u^+v^+\in E(G)$. Assume $u, u^+, v$ appear in the order $u, u^+, v $ along
%$\overrightarrow{C}$. Then  $C'=uxPyv\overleftarrow{C}u^+v^+\overrightarrow{C}u$ is a desired cycle. Therefore $W^+$ is an independent set in $G$. Since any two vertices $u,v\in W$ satisfying  $uv\not\in E(C)$, $W^+\cup \{x\}$ is also an independent set of $G$ with $|W^+\cup \{x\}|>\frac{n}{t+1}-1+1=\frac{n}{t+1}$. Let $S=V(G)\setminus (W^+\cup \{x\})$. We have $c(G-S)=|W^+\cup \{x\}|>\frac{n}{t+1} \ge 2$, implying $\tau(G)<t$, giving a contradiction.
%\qed
%\begin{lem}\label{2.6}{\rm(\cite{D1952,O1960})} Let $G$ be a connected graph with $n$ vertices and $\delta(G)\geq \frac{n+1}{2}$. Then $G$ is hamiltonian connected.
%\end{lem}

\begin{lem}\label{2.4} Let $G$ be a  more than 1-tough  $(P_3\cup P_1)$-free graph on $n\ge 3$ vertices. Then $G$ is hamiltonian connected.
\end{lem}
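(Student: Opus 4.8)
The plan is to establish Hamiltonian connectedness of a more than $1$-tough $(P_3\cup P_1)$-free graph $G$ on $n\ge 3$ vertices by showing that for any two distinct vertices $u,v\in V(G)$, there is a Hamiltonian $u$--$v$ path. I would first exploit the structural rigidity forced by the $(P_3\cup P_1)$-free condition: such graphs are highly ``dense'' in a strong sense, because forbidding $P_3\cup P_1$ as an induced subgraph severely restricts how an induced $P_3$ can coexist with a vertex outside its neighborhood. Concretely, for any induced path $abc$ in $G$, every vertex of $G$ other than $a,b,c$ must be adjacent to at least one of $a,b,c$ (otherwise that vertex together with $abc$ would form an induced $P_3\cup P_1$). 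This neighborhood-covering property is the engine of the whole argument.

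\smallskip

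The cleanest route is to reduce Hamiltonian connectedness to a Hamiltonicity-type Ore/Dirac condition that can be fed into Lemma~\ref{lem:degree-sum}. A standard device is the following: to find a Hamiltonian $u$--$v$ path in $G$, form an auxiliary graph $G'$ by adding a new vertex $w$ adjacent precisely to $u$ and $v$ (and to nothing else); then a Hamiltonian cycle of $G'$ corresponds exactly to a Hamiltonian $u$--$v$ path of $G$. I would verify that $G'$ is still more than $1$-tough (the added degree-$2$ vertex $w$ cannot decrease toughness below the required threshold when $\tau(G)>1$, since any cutset separating components of $G'$ relates to a cutset of $G$ after accounting for $w$), and then argue that $G'$ satisfies the degree-sum hypothesis of Lemma~\ref{lem:degree-sum}. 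For the degree-sum bound I would lean on the covering property above to show that nonadjacent vertices in $G$ (and hence in $G'$) have large combined degree: if $x\nsim y$, then since $G$ is $(P_3\cup P_1)$-free and more than $1$-tough (so not complete but very dense), the common and distributed neighborhoods of $x$ and $y$ must cover almost all of $V(G)$, forcing $d(x)+d(y)$ to exceed the threshold $\frac{2n}{t+1}+t-2$ with $t$ slightly above $1$.

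\smallskip

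The main obstacle I anticipate is controlling the degree sum uniformly, because the covering property only guarantees that every outside vertex sees the $P_3$ as a whole, not any single prescribed vertex; translating ``every vertex hits one of three specified vertices'' into a genuine lower bound on $d(x)+d(y)$ for a \emph{particular} nonadjacent pair requires care. I would handle this by a case analysis on the structure of the independence number: the $(P_3\cup P_1)$-free condition sharply bounds the independent sets that can appear together with an induced $P_3$, so either $G$ has very small independence number (in which case large minimum degree follows and Lemma~\ref{2.2} or Lemma~\ref{lem:degree-sum} applies directly), or the few ``low-degree'' vertices form a structured independent-like set whose non-neighbors are tightly constrained, again yielding the required degree-sum bound. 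The toughness hypothesis $\tau(G)>1$ is essential here to rule out small separators that would otherwise permit sparse vertices, and I expect the delicate part to be the boundary case where $G$ is close to being complete multipartite, where one must check the degree-sum inequality by hand using that $t$ can be taken strictly larger than $1$.

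\smallskip

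As a fallback, if the auxiliary-graph reduction proves awkward to verify (particularly the toughness of $G'$), I would instead argue directly: take a longest $u$--$v$ path $P$ in $G$, suppose it misses some vertex $z$, and use the $(P_3\cup P_1)$-free structure together with a rotation-extension argument to derive a contradiction. Here $z$ must be adjacent to an interior edge-configuration of $P$, and the forbidden induced subgraph forces enough adjacencies along $P$ to perform a standard insertion of $z$, contradicting maximality. This direct approach sidesteps toughness of the auxiliary graph but shifts the difficulty onto managing the endpoints $u,v$ of the path during rotations, which is where I would expect the bookkeeping to be most intricate.
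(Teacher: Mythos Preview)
Your primary approach via the auxiliary graph $G'=G+w$ with $w\sim u,v$ does not go through on either count. First, $G'$ is \emph{not} more than $1$-tough in general: whenever $G-\{u,v\}$ is connected (which is typical, e.g.\ already for $G=K_n$ with $n\ge 4$), the set $\{u,v\}$ is a cutset of $G'$ with exactly two components ($\{w\}$ and $G-\{u,v\}$), so $\tau(G')\le 1$. Second, even granting toughness, Lemma~\ref{lem:degree-sum} cannot apply to $G'$: the new vertex $w$ has degree $2$, and any $x\notin\{u,v\}$ is nonadjacent to $w$, so you would need $d(x)+2>\frac{2(n+1)}{t+1}+t-2$ with $t$ barely above $1$, i.e.\ $d(x)\ge n-2$ for every such $x$. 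Nothing in the $(P_3\cup P_1)$-free hypothesis forces that, and your ``neighborhood-covering'' observation only shows that every vertex sees \emph{some} vertex of a given induced $P_3$, which is far from universal near-complete degree.

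Your fallback is on the right track---the paper does take a longest $u$--$v$ path $P$ and a component $D$ of $G-V(P)$---but the contradiction is not obtained by inserting a missed vertex via rotation--extension. The decisive step you are missing is a \emph{toughness count}: let $W=N_P(V(D))=\{x_0,\ldots,x_\ell\}$. Standard longest-path arguments give $x_ix_{i+1}\notin E(P)$ and $x_i^+x_j^+\notin E(G)$ for distinct $i,j$. Now $(P_3\cup P_1)$-freeness forces every component of $G-W-V(D)$ to be complete, and the nonadjacency of the successors $x_i^+$ then places the path segments between consecutive $x_i$'s into \emph{distinct} components of $G-W$. Together with $D$ this yields $c(G-W)\ge \ell+1=|W|$, so $\tau(G)\le 1$, contradicting the hypothesis. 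Your sketch never isolates $W$ as a cutset nor performs this component count, and without it the ``insertion of $z$'' idea has no leverage: the structural information you cite does not by itself produce a longer path.
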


\begin{proof}
Let $u,v\in V(G)$ be any two distinct vertices. We find in $G$
a longest $uv$-path $P$. We may assume  that $P$	 is not a hamiltonian path of
$G$. Let $D$ be a component of $G-V(P)$, $W=N_P(V(D))=\{x_0, x_1, \ldots, x_\ell\}$. Then $\ell\ge 2$ by the toughness of $G$. Assume
for each $i\in [1,\ell]$, $x_{i-1}$ is in between $u$ and $x_i$ on $P$,
where note $x_0$ could be the same as $u$ and $x_\ell$ could be the same as $v$.
Let $h=\ell$ if $x_\ell \ne v$, and $h=\ell-1$
if $x_\ell=v$, and
let $W^+=\{x_i^+: i\in [0,h]\}$
be the set of the immediate successors of vertices  $x_i$ on $P$
along the direction from $u$ to $v$.

For any $i\in [0,\ell-1]$, we claim that  $x_ix_{i+1}\not\in E(P)$. For otherwise, let $x_i', x_{i+1}'\in V(D)$ such that $x_{i}'x_i,x_{i+1}'x_{i+1}\in E(G)$, and let $P'$
be an $x_{i}'x_{i+1}'$-path in $D$. Then $uPx_ix_i'P'x_{i+1}'x_{i+1}Pv$ is a longer $uv$-path than $P$.

For any two distinct  $i,j\in [0,h]$,  we claim that $x_i^+x_j^+ \not\in E(G)$.
For otherwise, let $x_i', x_{j}'\in V(D)$ such that $x_{i}'x_i,x_{j}'x_{j}\in E(G)$, and let $P'$
be an $x_{i}'x_{j}'$-path in $D$. Assume, by symmetry that  $i<j$.
Then $uPx_ix_i'P'x_{j}'x_jPx_i^+x_j^+Pv$ is a longer $uv$-path than $P$.

For each $i\in [0,\ell-1]$, let $L_i=x_iPx_{i+1}-\{x_i,x_{i+1}\}$.
Since $x_ix_{i+1} \not\in E(P)$, each $L_i$ is a path with at least one vertex.
Since $G$ is $(P_3\cup P_1)$-free, each component of $G-W-V(D)$ is a complete graph.
For any two distinct $i,j\in [0,h]$, as $x_i^+x_j^+ \not\in E(G)$, $L_i$
and $L_j$ are contained in distinct components of $G-W$. Those components containing $L_i$'s together with $D$ gives $c(G-W) \ge \ell +1 \ge 3$.
Thus $\tau(G) \le \frac{|W|}{c(G-W)}  \le 1$, giving a contradiction to $\tau(G)>1$.

\end{proof}

Let $G$ be a graph, $S$ be a cutset of $G$, and $D_1,D_2,\ldots,D_{\ell}$
be all the components of $G-S$. For any $i, j\in [1,\ell]$, if there exists $S_i \subseteq N_G(V(D_i))\cap S$  such that (i) $|S_i|=2s$ for some integer $s\ge 1$, (ii)  if $|V(D_i)| \ge 2$, then $S_i$ can be partitioned into $S_{i1}$ and $S_{i2}$ and $N_{D_i}(S_i)$ can be  partitioned into $W_{i1}$ and $W_{i2}$  with $|S_{i1}|=|S_{i2}|=s$ such that $S_{i1}\subseteq N_G(W_{i1})\cap S$ and $S_{i2}\subseteq N_G(W_{i2})\cap S$; and
 (iii) $S_i\cap S_j=\emptyset$ if $i\ne j$, then we say $G$ has a \emph{generalized  $K_{1,2s}$-matching}
with centers as  components of $G-S$, and call vertices in $S_i$ the \emph{partners}
of $D_i$ from $S$.

\begin{lem}\label{2.5} Let $G$ be a $t$-tough graph on $n$ vertices  for some $t\ge 2$,  $S$ be a cutset in $G$, and let $s=\lfloor t/2\rfloor $. Then $G$
has a generalized  $K_{1,2s}$-matching with centers as  components of $G-S$.

\end{lem}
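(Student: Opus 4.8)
The plan is to realise the desired generalized matching through a single application of the degree--constrained subgraph theorem (Lemma~\ref{2.1}), with the components of $G-S$ playing the role of the left part. Write $D_1,\dots,D_\ell$ for the components of $G-S$; since $t\ge 2>1$ the graph $G$ is connected and $\ell=c(G-S)\ge 2$, so each $D_i$ has at least one neighbour in $S$. Form the bipartite graph $H$ with parts $X=\{D_1,\dots,D_\ell\}$ and $Y=S$, joining $D_i$ to $v\in S$ exactly when $v\in N_G(V(D_i))$, and set $f(D_i)=2s$ for every $i$. If the hypothesis of Lemma~\ref{2.1} can be verified, the resulting subgraph provides, for each $i$, a set $S_i:=N_H(D_i)\subseteq N_G(V(D_i))\cap S$ with $|S_i|=2s$; moreover the condition $d_H(u)=1$ on the $S$--side forces the $S_i$ to be pairwise disjoint. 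This at once yields (i) and (iii), so the whole task reduces to checking Hall's condition and then arranging the internal splitting demanded by (ii).

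First I would check the hypothesis of Lemma~\ref{2.1}. Fix $\mathcal S\subseteq X$ and put $N=N_H(\mathcal S)=\bigcup_{D_i\in\mathcal S}(N_G(V(D_i))\cap S)\subseteq S$. Because each $D_i$ is a component of $G-S$, all edges leaving $D_i$ end in $S$, hence in $N$; thus in $G-N$ every $D_i\in\mathcal S$ is a full connected component. Consequently $c(G-N)\ge|\mathcal S|$, and since $\ell\ge2$ there is always a further component, so $c(G-N)\ge\max(2,|\mathcal S|)\ge 2$. Toughness now gives $|N|\ge t\cdot c(G-N)\ge t|\mathcal S|\ge 2s|\mathcal S|=\sum_{D_i\in\mathcal S}f(D_i)$, using $2s=2\lfloor t/2\rfloor\le t$. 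This is precisely the hypothesis of Lemma~\ref{2.1}, so the sets $S_i$ exist.

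The real work is condition (ii), and this is where I expect the main obstacle. The key preparatory fact, and the one place toughness re-enters, is a local spreading estimate. Writing $N_S(A):=N_G(A)\cap S$ and $Q_i:=N_S(V(D_i))$, for a nontrivial $D_i$ and any $w\in V(D_i)$ the set $\{w\}\cup N_S(V(D_i)\setminus\{w\})$ separates the nonempty set $V(D_i)\setminus\{w\}$ from the rest of $G$, which still has a second component; hence toughness forces $|N_S(V(D_i)\setminus\{w\})|\ge 2t-1$, and in particular $|Q_i|\ge 2t-1\ge 4s-1$. From this I can produce a vertex--bipartition $V(D_i)=V_i^1\sqcup V_i^2$ with $|N_S(V_i^j)|\ge s$ for $j=1,2$: if some vertex already dominates $\ge s$ vertices of $S$, take it as $V_i^1$ (then $V_i^2$ dominates $\ge 2t-1\ge s$); otherwise take for $V_i^1$ a \emph{minimal} set dominating $\ge s$ vertices, which then dominates at most $2s-1$ and uses at most $2s-1\le t-1$ vertices, whence $|N_S(V_i^2)|\ge|Q_i|-(2s-1)\ge 2s$. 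In particular one side can always be taken to have at most $2s-1$ vertices, and no single vertex can monopolise the connection of $D_i$ to $S$.

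To finish (ii) I would pass from these fixed bipartitions to the actual matching by applying Lemma~\ref{2.1} once more, now with left part $\{V_i^1,V_i^2:\ |V(D_i)|\ge 2\}\cup\{\{w_i\}:\ D_i=\{w_i\}\}$, demands $f(V_i^j)=s$ and $f(\{w_i\})=2s$, and edges recording the $S$--neighbourhood of each part; setting $S_{ij}=N_H(V_i^j)$ and $W_{ij}=V_i^j\cap N_{D_i}(S_i)$ would give exactly the partition in (ii) while keeping everything globally disjoint. The delicate point — the heart of the lemma — is verifying Hall's condition for this refined graph: removing $N_H(\mathcal T)$ no longer isolates a component $D_i$ for which only one of $V_i^1,V_i^2$ lies in $\mathcal T$, so the naive component count pays only for the fully--selected components. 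The intended remedy is to augment the cut by the complementary parts of the singly--selected components — affordable precisely because one side of each bipartition has at most $2s-1\le t-1$ vertices — and to trade this added cost against the extra separated components. Making this trade balance against the tight relation $t\ge 2s$, and in particular ruling out the configuration in which several small parts share almost the same few neighbours in $S$, is the main difficulty; it is here that the slack in the hypothesis (and, in the application, the value $t=7$) must be spent.
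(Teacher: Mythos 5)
Your first application of Lemma~\ref{2.1}, with each whole component $D_i$ as a single left vertex and demand $f(D_i)=2s$, is correct and cleanly delivers conditions (i) and (iii): removing $N_H(\mathcal S)$ leaves every $D_i\in\mathcal S$ as a full component of $G-N_H(\mathcal S)$, so toughness gives $|N_H(\mathcal S)|\ge t|\mathcal S|\ge 2s|\mathcal S|$. (This is in fact tidier than the paper, which obtains (i)--(iii) in one stroke by representing each component by \emph{two} vertices: for nontrivial $D_i$ it partitions $N_{D_i}(S)$ into $W_i^1\cup W_i^2$ and contracts each side to a vertex, for trivial $D_i$ it splits the vertex in two, and then demands $s$ partners per contracted vertex.) The problem is condition (ii), which is the actual content of the lemma, and there your proposal stops being a proof: you set up the refined bipartite graph whose left vertices are the parts $V_i^1,V_i^2$, then explicitly declare the verification of its Hall-type condition to be ``the main difficulty,'' offer only an ``intended remedy,'' and leave it unresolved. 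A proposal that defers its central step is a plan, not a proof.

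Moreover, the remedy as sketched would fail, for precisely the reason you flag. Since your bipartition $V_i^1\sqcup V_i^2$ is fixed \emph{before} any violating set is known, the bad configuration is realizable in graphs of arbitrarily high toughness: let each of $D_1,\dots,D_k$ contain one vertex $w_i$ whose entire $S$-neighbourhood is a common set $X$ with $|X|=s$, while $V(D_i)\setminus\{w_i\}$ is joined to many vertices of $S$; your selection rule permits $V_i^1=\{w_i\}$, and then $\mathcal T=\{V_1^1,\dots,V_k^1\}$ violates Hall's condition ($|N_S(\mathcal T)|=s<ks$), yet no toughness contradiction can be extracted: deleting $X$, or $X$ together with the parts $V_i^1$, creates no new components (each $D_i$ stays attached to $S$ through $V_i^2$), and augmenting by the complementary parts is exactly the unaffordable case here, since the parts lying in $\mathcal T$ are the \emph{small} sides and their complements $V_i^2$ are huge. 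So the trade you hope to make cannot be balanced with a partition chosen in advance; the splitting must interact with the violating set itself (in the paper this is built in from the start, since a Hall failure $|N_H(T^*)|<s|T^*|$ is converted directly into the cut $N_H(T^*)$ with $c(G-N_H(T^*))\ge|T^*|/2$). I will add that your worry is not frivolous: the paper's inequality is itself immediate only when $T^*$ contains both vertices representing a component, so you have put your finger on the genuinely delicate point of this lemma --- but identifying the difficulty is not the same as resolving it, and as submitted your argument does not establish condition (ii).
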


\begin{proof} Let $D_1,D_2,\ldots,D_{\ell}$
	be all the components of $G-S$.
	For each $D_i$, as $\tau(G)\geq 2$, $| N_G(V(D_i)) \cap S|\geq 2t$.
		 Furthermore, if $|V(D_i)|\geq 2$, then $|N_{D_i}(S)|\geq 2$. Otherwise, if $|N_{D_i}(S)|=1$, then the neighbor of $S$ in $D_i$ would be a cutvertex in $G$.

We will construct a bipartite graph $H$ in the following steps. For each $D_i$ with $|V(D_i)|\geq 2$, let $N_{D_i}(S)=W_i$. Let $W_i^{1}\cup W_i^{2}$ be a partition of $W_i$ such that both of them are nonempty. We contract all vertices in $W_{i}^1$ into a single vertex $u_i$ and all vertices in $W_{i}^2$ into a single vertex $v_i$. Then $G[D_i]$ is changed into an edge $u_iv_i$. The edges between $D_i$ and $S$ are now between $u_iv_i$ and $S$. For each $D_i$ with $|V(D_i)|=1$, let $V(D_i)=\{w_i\}$. Split $w_i$ into two vertices $u_i$ and $v_i$, and distribute the edges in $G$ incident with $w_i$ between $u_i$ and $v_i$.

Let $T=\{u_i,v_i:1\leq i\leq \ell\}$ be the collection of vertices corresponding to $D_i's$ and let $H=H[S,T]$ be a bipartite graph with vertex set $V(H)=S\cup T$ and edge set $E(H)=E(S,\cup_{i=1}^{\ell}V(D_i))$. To complete the proof of Lemma \ref{2.5}, it is sufficient to prove that $H$ has a $K_{1,2s}$-matching saturating $T$. Suppose not, by Lemma~\ref{2.1}, there exists a nonempty subset $T^*\subseteq T$ such that
\begin{equation}\label{eq1}
|N_{H}(T^*)|<s|T^*|.
\end{equation}
As $T^{*}$ corresponds to at  least $\frac{|T^{*}|}{2}$ components of $G-S$, \eqref{eq1} implies that
\begin{equation*}
c(G-N_{H}(T^{*}))\geq \frac{|T^{*}|}{2}=\frac{s|T^{*}|}{2s}>\frac{1}{2s}|N_{H}(T^{*})|,
\end{equation*}
giving that $\tau(G)<2s\le t$, a contradiction to the toughness of $G$. This completes the proof of Lemma \ref{2.5}.
\end{proof}

\section{Proof of Theorem \ref{thm}}
\proof  We may assume $G$ is not a complete graph.
Since $G$ is 7-tough and noncomplete, $G$ is 14-connected and $\delta(G)\geq 14$. By Lemma \ref{2.2}, we may further assume  $\delta(G)\leq \frac{n}{8}-1$. It follows that
\begin{equation}\label{eqn1}
n\geq 8\delta(G)+8\geq 120.
\end{equation}
Furthermore, by Lemma~\ref{lem:degree-sum}, we may assume that
there exist two nonadjacent vertices in $G$ with degree sum at most  $ \frac{n}{4}+5$.
Let $u,v\in V(G)$ be two nonadjacent vertices such that $d(u)+d(v)\leq \frac{n}{4}+5$. Let $$S_{uv}=N(u)\cup N(v).$$ Then $|V(G)\setminus (S_{uv}\cup \{u,v\})|\geq n-\frac{n}{4}-5-2\geq 1$. It follows that $c(G-S_{uv})\geq 3$. The following claim is obvious by the $(P_3\cup 2P_1)$-freeness of $G$.

\begin{claim}\label{claim1}Each component of $G-S_{uv}$ is a complete graph.
\end{claim}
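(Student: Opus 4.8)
The plan is to argue by contradiction, exploiting the two nonadjacent vertices $u$ and $v$ themselves as the ``two isolated vertices'' demanded by the forbidden subgraph $P_3\cup 2P_1$.

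First I would record a structural observation about how $u$ and $v$ sit relative to $S_{uv}$. Since $u\nsim v$, neither $u$ nor $v$ lies in $N(u)\cup N(v)=S_{uv}$, so both belong to $V(G)\setminus S_{uv}$. Moreover every neighbor of $u$ lies in $N(u)\subseteq S_{uv}$, so $u$ has no neighbor in $G-S_{uv}$, and likewise for $v$. Consequently $\{u\}$ and $\{v\}$ are themselves (trivial, hence complete) components of $G-S_{uv}$, and, crucially, for \emph{any} other component $D$ of $G-S_{uv}$, every vertex of $D$ lies outside $S_{uv}$ and is therefore nonadjacent to both $u$ and $v$.

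Next I would suppose, for contradiction, that some component $D$ of $G-S_{uv}$ is not complete. Being connected but not complete, $D$ contains two vertices $a,c$ at distance exactly $2$ in $D$; letting $b$ be a common neighbor, the set $\{a,b,c\}$ induces a $P_3$ in $G$ with $a\nsim c$. Combining this with the previous observation, $u$ and $v$ are nonadjacent to each of $a,b,c$ and to each other, so $G[\{a,b,c,u,v\}]$ is an induced copy of $P_3\cup 2P_1$. This contradicts the $(P_3\cup 2P_1)$-freeness of $G$, which forces every component of $G-S_{uv}$ to be complete.

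The entire content sits in the opening observation: the degree-sum hypothesis, through the definition of $S_{uv}$, forces $u$ and $v$ to be isolated in $G-S_{uv}$ and mutually nonadjacent, so they are ready-made as the two free vertices of the forbidden graph. I do not expect a genuine obstacle here; the only facts invoked are the elementary one that a connected noncomplete graph contains a pair of vertices at distance $2$ (hence an induced $P_3$), and that vertices of $D$, lying outside $S_{uv}$, cannot be adjacent to $u$ or $v$.
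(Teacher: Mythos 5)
Your proof is correct, and it supplies exactly the verification the paper leaves implicit: the paper simply declares this claim ``obvious by the $(P_3\cup 2P_1)$-freeness of $G$,'' the intended argument being precisely that a non-complete (hence nontrivial, connected) component would contain an induced $P_3$, which together with two vertices nonadjacent to it and to each other yields a forbidden $P_3\cup 2P_1$. Your choice of $u$ and $v$ themselves as the two isolated vertices (valid since $u\nsim v$ forces both outside $S_{uv}=N(u)\cup N(v)$ and isolated in $G-S_{uv}$) is a clean instantiation of this, so the approach is essentially the same as the paper's.
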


Let
\begin{eqnarray*}
S_u&=&\{x\in S_{uv}: N_G(x)\cap (V(G)\setminus S_{uv})=\{u\}\}, \\
S_v&=&\{x\in S_{uv}\setminus S_u: N_G(x)\cap \left(V(G)\setminus (S_{uv} \setminus S_u)  \right)=\{v\}\}, \\
S&=& S_{uv}\setminus (S_u\cup S_{v}).
\end{eqnarray*}
%Let $S_{u}$ be the set of vertices in $S_{uv}$ which are only adjacent to $u$ in $G-S_{uv}$, $S_{v}$ be the set of vertices in $S_{uv}\setminus S_{u}$ which are only adjacent to $v$ in $G-(S_{uv}\setminus S_{u})$, and $S=S_{uv}\setminus (S_u\cup S_{v})$.
In such a construction, $u$ and $v$ belong to distinct components of $G-S$. Then we have the following claim by the construction of $S$ and $(P_3\cup 2P_1)$-freeness of $G$.

\begin{claim}\label{claim2}\begin{enumerate}[{\rm(i)}]

%\item $G[S_{u}]$ and $G[S_{v}]$ are both complete graphs;

\item $c(G-S)=c(G-S_{uv})\geq 3$ and each component of $G-S$ is a complete graph;

\item for any vertex $x\in S$, $x$ is adjacent to vertices from at least two components of $G-S$;

\item if $c(G-S)\geq 4$, then for each vertex $x\in S$, $x$ is not adjacent to all vertices of at most one component of $G-S$.
\end{enumerate}
\end{claim}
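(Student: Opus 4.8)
The plan is to verify the three parts in order, reducing everything to the $(P_3\cup 2P_1)$-freeness of $G$ together with the defining properties of the partition $S_{uv}=S_u\cup S_v\cup S$. Throughout I write $C_u=\{u\}\cup S_u$ and $C_v=\{v\}\cup S_v$, and I keep in hand three facts: a vertex of $S_u$ has $u$ as its only neighbour outside $S_{uv}$, a vertex of $S_v$ has $v$ as its only neighbour outside $S_{uv}\setminus S_u$, and $u,v$ themselves lie in $V(G)\setminus S_{uv}$ (since $u\nsim v$).

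For part (i) I first show that in $G-S$ the set $S_u$ attaches only to $u$ and to itself: a vertex of $S_u$ is nonadjacent to every original component $D_i$ and to $v$ (these lie in $V(G)\setminus S_{uv}$ and differ from $u$), and nonadjacent to $S_v$ (by the defining condition of $S_v$). Symmetrically $S_v$ attaches only to $v$ and to itself, and no vertex of $S_u\cup S_v$ touches an original component. Hence the component of $u$ in $G-S$ is exactly $C_u$, that of $v$ is exactly $C_v$, the remaining components are unchanged, and $c(G-S)=c(G-S_{uv})\ge 3$. Completeness is then routine: each $D_i$ is complete by Claim~\ref{claim1}, while if $C_u$ contained nonadjacent $a,b\in S_u$ then $a\,u\,b$ is an induced $P_3$, and adjoining $v$ and any vertex $w$ of some $D_i$ — all four nonadjacent to $a,u,b$ and to each other — yields an induced $P_3\cup 2P_1$; the same argument handles $C_v$.

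For part (ii), let $x\in S$. As $x\in N(u)\cup N(v)$, either $x\sim u$, so $x$ meets $C_u$, or $x\sim v$, so $x$ meets $C_v$. In the first case $x\notin S_u$ forces a neighbour $w\ne u$ of $x$ inside $V(G)\setminus S_{uv}$, and $w\in\{v\}\cup\bigcup_i V(D_i)$ places $x$ on a second component; in the second case $x\notin S_v$ forces a neighbour $w\ne v$ of $x$ inside $(V(G)\setminus S_{uv})\cup S_u$, and such $w$ lies in $S_u$ or in some $D_i$, again a second component.

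Part (iii) is the crux. Suppose some $x\in S$ fails to dominate two distinct components $D_1,D_2$; fix $a_1\in D_1$, $a_2\in D_2$ with $a_1,a_2\nsim x$, so also $a_1\nsim a_2$. The key device is the induced subgraph $G'=G[\{x\}\cup\bigcup_{F\ne D_1,D_2}V(F)]$, namely $x$ together with all components other than $D_1,D_2$. Every vertex of $G'$ is nonadjacent to both $a_1$ and $a_2$, so any induced $P_3$ in $G'$ combines with $a_1,a_2$ into an induced $P_3\cup 2P_1$, a contradiction. Since each component is complete and distinct components are pairwise nonadjacent, $G'$ is $P_3$-free only if $x$ is adjacent to all of at most one component $F_0$ (outside $\{D_1,D_2\}$) and to none of the others. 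In that remaining case I invoke $c(G-S)\ge 4$: the components outside $\{D_1,D_2\}$ number at least two, and by part (ii) the set $T$ of components met by $x$ satisfies $2\le|T|\le 3$ with $T\subseteq\{D_1,D_2,F_0\}$. I then build the forbidden subgraph directly — a path $p\,x\,q$ with $p,q$ neighbours of $x$ in two components of $T$, together with two mutually nonadjacent vertices taken from untouched components (there are at least $c(G-S)-|T|$ of them), or, when $|T|=3$, from one untouched component and the spare witness $a_i$ of whichever of $D_1,D_2$ the path avoids.

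The main obstacle is exactly this last case of part (iii): after the $G'$ reduction one must still exhibit two mutually nonadjacent vertices that are jointly nonadjacent to the whole path $p\,x\,q$, and it is the hypothesis $c(G-S)\ge 4$ that makes the bookkeeping close — guaranteeing either two untouched components or one untouched component plus a free witness. With only three components the count can fail, which is precisely why parts (i)–(ii) cannot be strengthened to this conclusion.
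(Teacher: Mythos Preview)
Your proof is correct. The paper itself gives no argument for this claim beyond the single sentence ``Then we have the following claim by the construction of $S$ and $(P_3\cup 2P_1)$-freeness of $G$,'' so there is nothing substantive to compare against; your write-up is precisely the routine verification the paper leaves to the reader, carried out cleanly. The $G'$ device in part~(iii) is a tidy way to organise the case analysis, and your handling of the residual $|T|=3$ case (choosing the $P_3$ through $F_0$ and one of $D_1,D_2$ so that the other $a_i$ stays available) closes the one spot where the bookkeeping is tight.
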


{\bf \noindent Case 1} For any component $D$ of $G-S_{uv}$, $|V(G)\setminus (S_{uv}\cup V(D))|> \frac{n}{8}-1$.

Since $G-S_{uv}$ has at least one component $D^*$ which does not contain $u$ or $v$, by the assumption of Case 1, we know that $G-S_{uv}$ has at least one component other than $D^*$  not containing $u$ or $v$. It follows that $c(G-S_{uv})\geq 4$.  Then $c(G-S)\geq 4$. Furthermore, by Claim \ref{claim2}(iii), the following claim holds.

\begin{claim}\label{claim3}For each vertex $x\in S$, we have  $d_{G-S}(x)> \frac{n}{8}-1$.
\end{claim}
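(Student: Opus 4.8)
The plan is to combine the domination structure recorded in Claim~\ref{claim2}(iii) with the Case~1 hypothesis. Since we are in Case~1 we have $c(G-S)\geq 4$, so Claim~\ref{claim2}(iii) applies: for the fixed vertex $x\in S$ there is at most one component of $G-S$, which I will call $D'$ (choose $D'$ to be any component if $x$ happens to dominate all of them), such that $x$ is adjacent to every vertex of $V(G)\setminus(S\cup V(D'))$. Because the components of $G-S$ partition $V(G)\setminus S$, this immediately gives
\[
d_{G-S}(x)\;\ge\;\bigl(n-|S|\bigr)-|V(D')|,
\]
so the claim reduces to establishing $|S|+|V(D')|<\tfrac{7n}{8}+1$.

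The main step is to transfer this count from the partition $(S,\,G-S)$ to the partition $(S_{uv},\,G-S_{uv})$, on which the Case~1 hypothesis is phrased. Recall $S\subseteq S_{uv}$ with $S_{uv}=S\cup S_u\cup S_v$, and that $u,v$ lie in distinct components of $G-S$. By the definition of $S_u$, every vertex of $S_u$ has its only neighbour outside $S_{uv}$ equal to $u$, so every vertex of $S_u$ lies in the component $D_u$ of $G-S$ containing $u$; symmetrically every vertex of $S_v$ lies in the component $D_v$ containing $v$. It follows that passing from $G-S_{uv}$ to $G-S$ merely absorbs $S_u$ into the $u$-component and $S_v$ into the $v$-component, without merging or splitting any other component. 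Hence for each component $D'$ of $G-S$ there is a component $D$ of $G-S_{uv}$ with $V(D')\subseteq V(D)\cup S_u\cup S_v$ (take $D=D'$ when $D'\notin\{D_u,D_v\}$, and the $G-S_{uv}$-component of $u$ or of $v$ otherwise). Together with $S\subseteq S_{uv}$ this yields the key containment
\[
S\cup V(D')\;\subseteq\;S_{uv}\cup V(D),
\]
and since $S\cap V(D')=\emptyset=S_{uv}\cap V(D)$ we conclude $|S|+|V(D')|\le|S_{uv}|+|V(D)|$.

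Finally I would invoke the Case~1 hypothesis for the component $D$ of $G-S_{uv}$, namely $|V(G)\setminus(S_{uv}\cup V(D))|=n-|S_{uv}|-|V(D)|>\tfrac{n}{8}-1$, which is exactly $|S_{uv}|+|V(D)|<\tfrac{7n}{8}+1$. Chaining the inequalities then gives $d_{G-S}(x)\ge n-|S|-|V(D')|\ge n-|S_{uv}|-|V(D)|>\tfrac{n}{8}-1$, as required. I expect the one genuine obstacle to be the bookkeeping behind the key containment: one must check carefully that the only difference between $G-S$ and $G-S_{uv}$ is the absorption of $S_u$ and $S_v$ into the components of $u$ and $v$, and it is precisely here that the definitions of $S_u,S_v$ and the fact that $u$ and $v$ remain in distinct components of $G-S$ are used.
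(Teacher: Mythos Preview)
Your argument is correct and is precisely the justification the paper leaves implicit: the paper simply asserts that Claim~\ref{claim3} ``holds by Claim~\ref{claim2}(iii)'' (together with the Case~1 hypothesis and $c(G-S)\ge 4$), and your containment $S\cup V(D')\subseteq S_{uv}\cup V(D)$ is exactly the bookkeeping needed to pass from the Case~1 bound on $G-S_{uv}$ to the desired bound on $d_{G-S}(x)$. In short, you have supplied the details of the same approach.
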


 In the following, we will construct a cycle that covers all the vertices of $G-S$ firstly, then insert the remaining vertices of $S$ into the cycle by applying Lemma~\ref{2.3} repeatedly.
Since $G$ is 7-tough and so is 2-tough, applying Lemma~\ref{2.5} with $t=2$, there exist distinct vertices $x_1,y_1,x_2,y_2,\ldots,x_t,y_t$ in $S$  such that $x_i,y_i$ are partners of $D_i$. As each $D_i$ is a complete graph, there exists a hamiltonian path $P_i$ in $D_i$ such that the two ends $a_i,b_i$ of $P_i$ are adjacent to $x_i$ and $y_i$, respectively. Note that $a_i=b_i$ if $|V(D_i)|=1$. Since $y_1$ is not adjacent to at most one component of $G-S$ by Claim \ref{claim2}(iii), without loss of generality, we may assume that $y_1\thicksim a_2$. Recursively we can assume that $y_i\thicksim a_{i+1}$ for each $i\in[1,t-2]$. If both of $y_{t-1}\sim a_t$ and $x_1\sim b_t$ hold, then we get a cycle $C=x_1a_1P_1b_1y_1a_2P_2b_2y_2\ldots y_{t-1}a_{t}P_tb_{t}x_1$ that contains all vertices of $\cup_{i=1}^{t}V(D_i)$.

Thus, $y_{t-1}\nsim a_{t}$ or $x_1\nsim b_t$. By Claim \ref{claim2}(iii), we have that $y_{t-1}\thicksim a_1$ or $x_1\thicksim b_{t-1}$. In either case, we can find a cycle $C'$ such that $\cup_{i=1}^{t-1}V(D_i)\subseteq V(C')$. We will extend $C'$ to a larger cycle $C$ containing also vertices of $D_t$ in the following and give a claim first.

\begin{claim}\label{claim4} \emph{(i)} For any two vertices $u\in N_{C'}(x_t)$ and $w\in N_{C'}(y_t)$, $uw\not\in E(C')$.

\emph{(ii)}   $x_t$ and $y_t$ can not be adjacent to all vertices of any nontrivial component of $G-S$.  As a consequence, $|V(D_i)|=1$ for each $i\in [3,t]$.
\end{claim}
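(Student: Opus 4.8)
The plan is to prove both parts by the standard device of showing that any violation lets us splice the hamiltonian path $a_tP_tb_t$ of $D_t$ into the cycle $C'$, producing a cycle $C$ with $V(C)=V(C')\cup\{x_t,y_t\}\cup V(D_t)\supseteq\bigcup_{i=1}^{t}V(D_i)$; since such a $C$ is exactly the cycle sought in Case~1, we may assume no violation occurs. Concretely, for part~(i) suppose $u\in N_{C'}(x_t)$, $w\in N_{C'}(y_t)$ and $uw\in E(C')$. Recall that $a_t\thicksim x_t$ and $b_t\thicksim y_t$ and that $a_tP_tb_t$ is a hamiltonian path of the complete graph $D_t$. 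Deleting the edge $uw$ from $C'$ and inserting the segment $u\,x_t\,a_tP_tb_t\,y_t\,w$ in its place produces a cycle through all of $V(C')\cup V(D_t)$. As $x_t,y_t\notin V(C')$ (they are the partners of the off-cycle component $D_t$, and the $2t$ partners produced by Lemma~\ref{2.5} are pairwise distinct, so $x_t,y_t$ differ from every connector used on $C'$), this cycle covers $\bigcup_{i=1}^{t}V(D_i)$. Hence we may assume no such pair exists, which is exactly~(i).

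For the first assertion of part~(ii), I would argue by contradiction: suppose $x_t$ and $y_t$ are both adjacent to every vertex of some nontrivial component $D$ that lies on $C'$, i.e.\ $D\in\{D_1,\dots,D_{t-1}\}$. Then $D$ appears on $C'$ as the hamiltonian subpath furnished by its own partners, and since $|V(D)|\ge 2$ this subpath contains an edge $pq\in E(C')$ with $p,q\in V(D)$. Because $x_t$ and $y_t$ are adjacent to all of $D$, we have $p\in N_{C'}(x_t)$ and $q\in N_{C'}(y_t)$ with $pq\in E(C')$, contradicting part~(i). Thus no nontrivial on-cycle component can be completely joined to both $x_t$ and $y_t$; the remaining component $D_t$ will be shown trivial in the next step, which makes the assertion vacuous for it.

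It remains to deduce that $|V(D_i)|=1$ for every $i\in[3,t]$, and this is where the real work lies. The idea is to feed the first assertion of~(ii) into the toughness bookkeeping of Claim~\ref{claim2}(iii): since $c(G-S)\ge 4$, each of $x_t$ and $y_t$ fails to be adjacent to all vertices of at most one component of $G-S$. By~(ii), every nontrivial component is one that $x_t$ fails to dominate or that $y_t$ fails to dominate, so there are at most two nontrivial components in total. I would then identify these two exceptional components with the two components containing $u$ and $v$—using in particular that $S\subseteq S_{uv}=N(u)\cup N(v)$, so that $x_t$ and $y_t$ are each adjacent to $u$ or to $v$—so that every component other than those two (in particular each of $D_3,\dots,D_t$, none of which contains $u$ or $v$) is adjacent to all its vertices from both $x_t$ and $y_t$ and is therefore trivial by the contrapositive of the first assertion of~(ii). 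The main obstacle is precisely this identification step: one must show that the two components permitted to be nontrivial are exactly those containing $u$ and $v$ rather than arbitrary exceptional components of $x_t$ and $y_t$, which seems to require a further rerouting argument beyond the bare counting, together with careful handling of the off-cycle component $D_t$ (to which the consecutive-vertices argument of~(ii) does not directly apply) and a precise fixing of the meaning of ``adjacent to all vertices'' so that the count of undominated components matches the two admissible nontrivial components.
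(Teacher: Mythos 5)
Your part (i) and the first assertion of part (ii) coincide with the paper's own argument: for (i), delete $uw$ and splice in the path $u\,x_t\,a_tP_tb_t\,y_t\,w$ (your additional check that $x_t,y_t\notin V(C')$, because the $2t$ partners from Lemma~\ref{2.5} are distinct, is correct and worth stating); for (ii), a nontrivial component lying on $C'$ occupies a contiguous segment of $C'$ by the construction of $C'$, hence contains an edge $pq\in E(C')$ with $p\in N_{C'}(x_t)$ and $q\in N_{C'}(y_t)$, contradicting (i).

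The genuine gap is in the final deduction, exactly where you say ``the real work lies.'' You correctly count that there are at most two nontrivial components (each nontrivial component must fail to be dominated by $x_t$ or by $y_t$, and by Claim~\ref{claim2}(iii) each of $x_t,y_t$ has at most one undominated component), but the one missing ingredient is not an identification with the components of $u$ and $v$: it is the indexing convention $|V(D_1)|\geq |V(D_2)|\geq \cdots \geq |V(D_t)|$, which the paper invokes at this point. Under that ordering the nontrivial components form a prefix of the list, so ``at most two nontrivial'' instantly yields $|V(D_i)|=1$ for all $i\in[3,t]$; this single line is the paper's entire conclusion. It also resolves your worry about the off-cycle component $D_t$: if $D_t$ were nontrivial, every $D_i$ would be, and then $D_1,D_2,D_3$ would be three nontrivial components lying on $C'$ (note $t\geq 4$ in Case 1), contradicting the count. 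By contrast, the route you propose --- showing the two permitted nontrivial components are precisely those containing $u$ and $v$ --- is both unnecessary and unprovable in general: nothing ties the exceptional components of $x_t$ and $y_t$ to the components of $u$ and $v$ (the component containing $u$ may perfectly well be trivial), and you yourself acknowledge you cannot complete that identification. So as written the proof of the consequence is incomplete, though it becomes the paper's proof once the size-monotone ordering of $D_1,\ldots,D_t$ is put in place.
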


\begin{proof} (i) Otherwise $C=(C'\setminus \{uw\})\cup \{ux_t,y_tw\}\cup P_t$ is a desired cycle containing all vertices of $G-S$.

(ii) Let $D$ be a nontrivial component of $G-S$ such that both of $x_t$ and $y_t$ are adjacent to all vertices of $D$. By the construction of $C'$ and $D$ being a complete graph, we know that there exist two adjacent vertices $u,w\in V(C')\cap V(D)$ such that $uw\in E(C')$, contradicting (i). So for each component $D$ of $G-S$ such that $x_t$ and $y_t$ are both adjacent to all vertices of $D$, it holds that $|V(D)|=1$. By Claim \ref{claim2}(iii), for any vertex $x\in S$, each of $x_t$ and $y_t$ is not adjacent to all vertices of at most one component of $G-S$. Since $|V(D_1)|\geq |V(D_2)|\geq \ldots\geq |V(D_t)|$, it follows that $|V(D_i)|=1$ for each $i\in[3,t]$.
\end{proof}

As $G$ is 7-tough and $|S| \le \frac{n}{4}+5$, it follows that
$$|V(D_1)|+|V(D_2)| \ge n-|S|-\left(\frac{|S|}{7}-2\right)>\frac{5n}{7}-4>81$$
by~\eqref{eqn1}. By the assumption of Case 1, we know that
$$
|V(D_2)|>\frac{n}{8}-1-\left(\frac{|S|}{7}-2\right)>\frac{5n}{56}>10
$$
also by~\eqref{eqn1}.

Thus both $D_1$ and $D_2$ are nontrivial components of $G-S$.
By Claim \ref{claim4}(ii), we assume, without loss of generality, that $x_t$ is adjacent to all vertices of $D_1$ and $y_t$ is adjacent to all vertices of $D_2$. If $y_{t-1}\thicksim a_1$, then $$C=x_tb_1P_1a_1y_{t-1}a_{t-1}y_{t-2}a_{t-2}\ldots y_3a_3y_2b_2P_2a_2y_ta_tx_t$$ is a cycle containing all vertices of $\cup_{i=1}^{t}V(D_i)$. If $x_1\thicksim b_{t-1}$, then $$C=x_tb_1P_1a_1x_1a_{t-1}y_{t-2}a_{t-2}y_{t-3}\ldots a_3y_2b_2P_2a_2y_ta_tx_t$$ is a cycle containing all vertices of $\cup_{i=1}^{t}V(D_i)$. Note that $a_i=b_i$ for each $i\in [3,t]$ by Claim~\ref{claim4}(ii). By Claim \ref{claim3}, for each vertex $x\in S\setminus V(C)$, $d_{C}(x)>\frac{n}{8}-1$. Applying  Lemma~\ref{2.3} recursively  on vertices of $S$
that are
out of the current cycle  containing $V(C)$,
   we get a hamiltonian cycle of $G$.

{\bf  \noindent Case 2} There exists a component $D_0$ of $G-S_{uv}$ such that $|V(G)\setminus (S_{uv}\cup V(D_0))|\leq\frac{n}{8}-1$.

Let the components of $G-S$ be $D_1,D_2,\ldots,D_t$ with $|V(D_1)|\geq |V(D_2)|\geq \ldots \geq |V(D_t)|$ for some integer $t\ge 3$. In this case, $|V(D_1)| \ge  |V(D_0)|\geq\frac{5n}{8}-4$ since $|S_{uv}|\leq \frac{n}{4}+5$. Let $S_1\subseteq S$ be a largest subset of $S$ such that $|N_{D_1}(S_1)|< 2|S_1|$, $S_2=S\setminus S_1$ and $S^{*}=N_{D_1}(S_1)\cup S_2$.
By the choice of $S_1$, it can be seen that for any subset $T\subseteq S_2$, $|N_{D_1}(T)\setminus N_{D_1}(S_1)|\geq 2|T|$. By Lemma \ref{2.1}, there exists a $K_{1,2}$-matching $M$ between $S_2$ and $V(D_1)\setminus N_{D_1}(S_1)$ with centers as vertices of $S_2$. Let $Q=V(M)\setminus S_2$ and  $D_1^{*}=D_1-N_{D_1}(S_1)-Q$.

As $|V(D_1)| \ge \frac{5n}{8}-4$, $|S_{uv}|\leq \frac{n}{4}+5$, and $|N_{D_1}(S_1)|< 2|S_1|$, it follows that
\begin{equation}\label{eqn2}
|V(D_1^*)| >\frac{5n}{8}-4-2\left(\frac{n}{4}+5\right)=\frac{n}{8}-14 \ge 1,
\end{equation}
where the last inequality above follows by~\eqref{eqn1}.  Thus $G-S^*$ has a component containing $D_1^*$.

\begin{claim}\label{claim5} If there are disjoint paths $Q_1, \ldots, Q_k$
	for some integer $k\ge 1$ in $G$ such that (i) vertices of $G-S^*-V(D_1)$
	are contained as internal vertices of one or more $Q_i$'s, and (ii) each $Q_i$
	has its two endvertices from $S^*$ and those two are the only vertices of $S^*$
	that are contained in $Q_i$, then $G$ has a hamiltonian cycle.
\end{claim}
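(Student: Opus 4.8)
The plan is to use the huge clique $D_1$ as a hub into which all the paths $Q_1,\ldots,Q_k$, the remaining cutset vertices of $S_2$, and the leftover vertices of $D_1$ itself are assembled into a single hamiltonian cycle. The starting observation is that $G-S^*-V(D_1)=S_1\cup V(D_2)\cup\cdots\cup V(D_t)$, so by hypothesis (i) the paths $Q_i$ already cover all of $S_1$ and all of $D_2,\ldots,D_t$ as internal vertices; by hypothesis (ii) every internal vertex of a $Q_i$ avoids $S^*$, while each endpoint lies in $S^*=N_{D_1}(S_1)\cup S_2$. Hence the only vertices still to be placed on the cycle are those of $V(D_1)=N_{D_1}(S_1)\cup Q\cup V(D_1^*)$ together with the vertices of $S_2$ that are not endpoints of any $Q_i$.

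First I would classify each path endpoint by how it attaches to the hub. An endpoint in $N_{D_1}(S_1)$ is itself a vertex of $D_1$, hence adjacent to every other vertex of $D_1$ since $D_1$ is complete by Claim~\ref{claim2}(i). An endpoint $s\in S_2$ is a center of the $K_{1,2}$-matching $M$ and so has two private partners $q_s^1,q_s^2\in Q\subseteq V(D_1)$; I attach $s$ to the hub by following $Q_i$ out to $s$ and then taking the edge $s q_s^1$, designating $q_s^1$ as the attachment vertex in $D_1$. Because $M$ is vertex-disjoint on the $Q$-side and $N_{D_1}(S_1)\cap Q=\emptyset$, all the attachment vertices selected this way are distinct vertices of $D_1$, and the unused partner $q_s^2$ simply remains a free hub vertex.

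Next I would insert each non-endpoint vertex $s\in S_2$ into the hub by placing it between its two partners $q_s^1,q_s^2$; as the partner-pairs of distinct $S_2$-vertices are disjoint, all such insertions can be carried out at once, simply by forcing each pair to be consecutive in the cyclic order on $D_1$. Finally I would assemble the cycle: arrange $Q_1,\ldots,Q_k$ cyclically, join consecutive attachment vertices by edges of the complete graph $D_1$, and in one designated gap route through every so-far-unused hub vertex (the remainder of $N_{D_1}(S_1)$, of $Q$, and all of $V(D_1^*)$, the last of which is nonempty by~\eqref{eqn2}) while performing the $S_2$ insertions. Completeness of $D_1$ guarantees every hub edge exists, and each vertex of $G$ is placed exactly once, giving a hamiltonian cycle.

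The step I expect to be the main obstacle is the bookkeeping of the dual role of the $S_2$-vertices: a single $s\in S_2$ may serve as an endpoint of some $Q_i$ (consuming one matching partner as its attachment vertex) or, if it is not an endpoint, as an interior vertex inserted between both of its partners, so one must verify that the partner sets are genuinely private and that attachment vertices and insertion slots never collide. Once this disjointness is checked, the size and completeness of $D_1$ make the closing routine, since there are vastly more free hub vertices than connections to realize.
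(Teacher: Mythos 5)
Your proposal is correct and matches the paper's own proof in all essentials: extend each path whose endpoint lies in $S_2$ through one of its private matching partners so that both ends land in the complete graph $D_1$, insert each unused vertex of $S_2$ between its two partners, and exploit the completeness of $D_1$ to stitch the extended paths and all remaining hub vertices into one hamiltonian cycle. The disjointness bookkeeping you flag as the main obstacle is exactly what the paper verifies (the stars of $M$ are vertex-disjoint and their leaves avoid $S^*$ and the $Q_i$'s), so no gap remains.
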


\proof Let $i\in [1,k]$ and  $x_i,y_i$ be the endvertices of $Q_i$, where note
$x_i,y_i \in S^*$.
If $x_i,y_i\in N_{D_1}(S_1)$, then let $P_i=Q_i$. If $x_i\in N_{D_1}(S_1),y_i\in S_2$, then let $y_i'$ be a partner of $y_i$ and $P_i=x_iQ_iy_iy_i'$. If $x_i\in S_2,y_i\in N_{D_1}(S_1)$, then let $x_i'$ be a partner of $x_i$ and $P_i=x_i'x_iQ_iy_i$. If $x_i,y_i\in S_2$, then let $x_i'$ and $y_i'$ be a partner of $x_i$ and $y_i$, respectively, and  $P_i=x_i'x_iQ_iy_iy_i'$. Then the endvertices of $P_i$ belong to $V(D_1)$ and all those paths are pairwise disjoint.
 Furthermore, for each $x\in S_2\setminus (\cup_{i=1}^{k}V(P_i))$, the partners of $x$  belong to $V(D_1)$ and are not contained in any $P_i$.
Since $D_1$ is a complete graph, there exists a hamiltonian cycle $C$ in $D_1$ such that the two endvertices  of each $P_i$ are consecutive on $C$ and the two partners of each $x\in S_2\setminus (\cup_{i=1}^{k}V(P_i))$ are also consecutive on $C$.
Now for each edge $wz\in E(C)$, if $w$ and $z$ are endvertices of some $P_i$,  we replace $wz$ by $P_i$; if $w$ and $z$ are the two partners of
some  $x\in S_2\setminus (\cup_{i=1}^{k}V(P_i))$, we replace $wz$ by $wxz$.
After doing this replacement for all such edges $wz$ of $C$, we have obtained a hamiltonian cycle of $G$.
\qed

{\bf \noindent  Subcase 2.1} $c(G-S^*)\geq3$.

In this case, each component of $G-S^{*}$ is a complete graph by the freeness of $P_3\cup 2P_1$. Let $R_1,R_2,\ldots,R_{\ell}$ be all the components of $G-S^{*}$ with $R_i\neq D_1-N_{D_1}(S_1)$ for each $i\in [1,\ell]$. Applying Lemma \ref{2.5} with $t=2$,
$G$ has a generalized $K_{1,2}$-matching with centers as $R_1,R_2,\ldots,R_{\ell}$.
We let $x_i$ and $y_i$ be the partners of $R_i$ from $S^{*}$, and let
 $a_i,b_i \in V(R_i)$ such that $a_ix_i, b_iy_i\in E(G)$, where  note that $a_i=b_i$ if $|V(R_i)|=1$. Moreover, there is a hamiltonian path $P_i'$ from $a_i$ to $b_i$ in $R_i$.
 Then we are done by Claim~\ref{claim5} by letting $Q_i=x_ia_iP_i'b_iy_i$ for each $i\in[1,\ell]$.

{\bf \noindent  Subcase 2.2} $c(G-S^*)=2$.

Let  $D_2^{*}$ be the other component of $G-S^*$ other than the component containing $D_1^*$. As $c(G-S) \ge 3$ and $D_1^*$ is a subgraph of the original component $D_1$ of $G-S$, it follows that $|V(D_{2}^*)|\geq 2$.
Note that  $D_2^{*}$ is $(P_3\cup P_1)$-free by the $(P_3\cup 2P_1)$-freeness of $G$. Since $G$ is 14-connected and so is 2-connected, there exist two distinct vertices $a_0,b_0\in V(D_{2}^*)$ and distinct vertices $x_0,y_0\in S^*$
such that $a_0x_0,b_0y_0\in E(G)$. If $D_2^*$ is hamiltonian connected, let $P$ be a hamiltonian path in $D_2^*$ between $a_0$ and $b_0$.  Define  $Q_1=x_0a_0Pb_0y_0$, then we apply Claim~\ref{claim5} to find a hamiltonian cycle of $G$.

Thus assume that $D_2^{*}$ is not hamiltonian connected.   Applying Lemma \ref{2.4}, we conclude  that $\tau(D_2^{*})\leq1$.
Let $W$ be a minimum tough set of $D_2^{*}$. Each component of $D_2^{*}-W$ is a complete graph as $D_2^{*}$ is $(P_3\cup P_1)$-free. Moreover, each vertex $x\in W$ is adjacent to at least two components of $D_2^{*}-W$ by $W$ being a tough set of $D_2^*$.

{\bf \noindent  Subcase 2.2.1} $c(D_2^{*}-W)=2$.

Then $1\leq |W|\leq 2$. Let $F_1$ and $F_2$ be the two components of $D_2^{*}-W$. Since $G$ is 7-tough, $G_1=G-W$ is 6-tough and so is 2-tough.
We can find in $G_1$ a generalized $K_{1,2}$-matching with centers as $F_1$ and $F_2$.
If $|W|=1$ or $W$ is a clique, then as each $F_i$ is a complete graph and each $F_i$ has two partners from $S^*$ such that these partners have at least two neighbors from $F_i$ in $G$ if $|V(F_i)| \ge 2$, we can find a hamiltonian
$ab$-path $P$ of $D_2^*$ such that $a\sim x$ and $b\sim y$ for distinct $x,y\in S^*$.
Now letting $Q_1=xaPby$, we can find a hamiltonian cycle of $G$ by Claim~\ref{claim5}.

Thus we assume $W=\{w_1,w_2\}$ and $w_1\not\sim w_2$. Let $x_i,y_i$ be two partners of $F_i$ from $S$ and $a_i,b_i\in V(F_i)$
such that $a_ix_i, b_iy_i\in E(G)$ for each $i\in [1,2]$.
Assume first that $|V(D_2^*)| \le 7$.  As $G$ is 7-tough and noncomplete,  $\delta(G) \ge 14$. Thus we can find distinct vertices $x_3,y_3, x_4,y_4\in S\setminus\{x_1,x_2,y_1,y_2\}$ such that $w_1\sim x_3, y_3$ and $w_2 \sim x_4,y_4$.
We find in $F_1$ a hamiltonian  $a_1b_1$-path $P'_1$, in $F_2$
a hamiltonian  $a_2b_2$-path $P'_2$. Let $Q_1=x_1a_1P_1'b_1y_1$, $Q_2=x_2a_2P_2'b_2y_2$,
$Q_3=x_3w_1y_3$ and $Q_4=x_4w_2y_4$. Then we apply Claim~\ref{claim5} to find a hamiltonian cycle of $G$.

Thus we have $|V(D_2^*)| \ge 8$. As $G$ is 7-tough and noncomplete, it is 14-connected.
Note also that $|S^*| \ge 2\tau(G) \ge 14$.  By the connectivity, there are distinct $a_i\in V(D_2^*)$ and distinct $x_i\in S^*$ such that $a_i\sim x_i$ for each $i\in [1,7]$.
As $W$ is a tough set of $D_2^*$,
we conclude that $\tau(D_2^*)=1$ and so $D_2^*$ is 2-connected.
Since each of $F_1$ and $F_2$ is a complete graph and $|W|=2$, we can find a hamiltonian cycle $C$
of $D_2^*$  such that $a_ia_j\in E(C)$ for some distinct $i,j\in [1,7]$.
Now letting $Q_1$ be obtained from $C$ by deleting $a_ia_j$ and adding $a_ix_i$ and $a_jx_j$,
we can apply Claim~\ref{claim5} to find a hamiltonian cycle of $G$.

{\bf \noindent  Subcase 2.2.2} $c(D_2^{*}-W)\geq 3$.

Then $D_2^*$ contains a complete bipartite graph with bipartition as $W$ and $V(D_2^{*})\setminus W$ by $(P_3\cup P_1)$-freeness of $D_{2}^{*}$.  Since $G$ is 7-tough and so $G_1=G-\{x_0,y_0\}$
is 6-tough,  applying Lemma \ref{2.5} with $t=4$, $G_1$ has a generalized $K_{1,4}$-matching
with centers as components of $D_2^*-W$.
If a component  $D$ of  $D_2^*-W$ has at least two vertices, then $V(D)$
has two disjoint subsets such that vertices from each of them are adjacent in $G$
to two distinct vertices from $S^*\cup W$.
Thus
for at least $c(D_2^{*}-W)-\frac{1}{2}|W|$ components $F_i$ of $D_2^{*}-W$, we can find a
generalized $K_{1,2}$-matching in $G_1-W$ with them as centers.

 Let $\mathcal{F}$ be the collection of all components of $D_2^{*}-W$ and $\mathcal{F}^{*}$ be those that are centers of the generalized $K_{1,2}$-matching of $G_1-W$. If $a_0,b_0\in W$, let $\mathcal{F}_1\subseteq \mathcal{F}^{*}$ such that $|\mathcal{F}_1|=|\mathcal{F}|-(|W|-1)$.
 If $a_0\in W$ and $b_0\in V(D_2^*-W)$, then we let $\mathcal{F}_1\subseteq \mathcal{F}^{*}$ such that $|\mathcal{F}_1|=|\mathcal{F}|-|W|$ and that the vertex $b_0$ is not contained in any component of $\mathcal{F}_1$ (requires at most $|\mathcal{F}|-|W|+1$ components from $\mathcal{F}^{*}$). If  $a_0, b_0\in V(D_2^*-W)$, then we let $\mathcal{F}_1\subseteq \mathcal{F}^{*}$ such that $|\mathcal{F}_1|=|\mathcal{F}|-(|W|+1)$
 and none of the vertices $a_0$ and $b_0$ is  contained in any component of $\mathcal{F}_1$ (requires at most $|\mathcal{F}|-(|W|+1)+2$ components from $\mathcal{F}^{*}$).
 Note that such $\mathcal{F}_1$ exists as  at least  $c(D_2^{*}-W)-\frac{1}{2}|W|$ components  of $D_2^{*}-W$ have two partners from $S\setminus \{x_0,y_0\}$, and
 $$
 c(D_2^{*}-W)-\left \lfloor\frac{1}{2}|W| \right\rfloor \ge |\mathcal{F}|-(|W|-1) =c(D_2^{*}-W)-|W|+1.
 $$
  Let $\mathcal{F}_2=\mathcal{F}\setminus \mathcal{F}_1$.
   Since the induced subgraph between $W$ and $D_2^{*}-W$ is a complete bipartite graph, there is a hamiltonian path  $P$ between $a_0$ and $b_0$ containing all vertices from $W$ and components in $\mathcal{F}_2$.  For each $F_i\in \mathcal{F}_1$, let $x_i,y_i$ be its partners from $S^*$ and $a_i,b_i\in V(F_i)$ such that $a_ix_i, b_iy_i\in E(G)$.
   As each component of $D_2^{*}-W$ is complete, there is a path $P'_i$ between $a_i$ and $b_i$ containing all vertices of each $F_i\in \mathcal{F}_1$.
   Now let $Q_0=x_0a_0Pb_0y_0$, and $Q_i=x_ia_iP_i'b_iy_i$ for each $F_i\in \mathcal{F}_1$.
   Applying Claim~\ref{claim5}, we find a hamiltonian cycle of $G$.
    \qed

%\section{Conclusion and open problems}

\bibliography{toughness}

\end{document}